\documentclass[11pt,reqno]{amsart}
\usepackage[utf8]{inputenc}
\bibliographystyle{spbasic}
\usepackage{amscd,amssymb,amsmath,amsthm}
\usepackage[arrow,matrix]{xy}
\usepackage{graphicx}
\usepackage{epstopdf}
\usepackage{color}
\usepackage{multicol}
\usepackage{tikz}
\topmargin=0.1in \textwidth6.1in \textheight8.0in

\newtheorem{thm}{Theorem}
\newtheorem{defn}{Definition}
\newtheorem{lemma}{Lemma}
\newtheorem{pro}{Proposition}

\newtheorem{ex}{Example}

\makeatletter
\@namedef{subjclassname@2020}{%
  \textup{2020} Mathematics Subject Classification}
\makeatother

\numberwithin{equation}{section} \setcounter{tocdepth}{1}


\begin{document}
\title [Global Dynamics of a Discrete Mosquito Model with Allee Effect]
{Global Dynamics of a Discrete Mosquito Model with Allee Effect}

\author{Z.S. Boxonov}

\address{Z.Boxonov,
V.I.Romanovskiy Institute of Mathematics, 9, University str. 100174, Tashkent, Uzbekistan.}
 \email {z.boxonov@mathinst.uz}

\begin{abstract} In this paper, we investigate the dynamical behavior of a two-dimensional discrete mosquito model incorporating an Allee effect. We analyze the system's trajectory comprehensively, examining both local and global dynamics. Our analysis begins by establishing the invariance of the positive trajectory within the first quadrant. We then investigate the existence of fixed points within this region. Following this, we perform a stability analysis to determine both local and global asymptotic stability of the identified fixed points. Finally, we validate the effectiveness and applicability of our theoretical results through numerical examples.
\end{abstract}

\subjclass[2020] {39A12, 39A30, 92D25}

\keywords{Discrete mosquito population model; invariant set; cooperative map; local behavior; global behavior.} \maketitle

\section{\textbf{Introduction}}

Analyzing the global dynamics and stability of complex systems remains a significant challenge in discrete dynamical systems. This has spurred recent interest in the global stability of discrete-time systems, a key area within dynamic system theory. In this context, we investigate the global dynamics of a discrete mosquito population model.

Mosquitoes transmit a variety of diseases, including malaria and several viruses, making their population control a crucial public health practice, especially in tropical regions.

Mathematical models play a crucial role in understanding mosquito population dynamics. These models can be continuous-time or discrete-time, as exemplified by the works in \cite{BR1}-\cite{BR4}, \cite{J}-\cite{LP}, \cite{Rpd}, and \cite{RV}.

This work focuses on a discrete-time dynamical system for a simplified mosquito population model presented in \cite{J.Li}. This model incorporates Allee effects, which are situations where population growth is limited at low densities. Mosquitoes undergo complete metamorphosis, with four distinct development stages: egg, larva, pupa, and adult. To simplify the model, we combine the three aquatic stages (egg, larva, pupa) into a single larvae class denoted by $x$. The adult mosquito population is denoted by $y$ \cite{J.Li}. Moreover, we assume that density dependence exists only in the larvae stage.

Denote the birth rate, i.e., the oviposition rate of adults by $\beta(t)$; the rate of emergence from larvae to adults by a function of the larvae with the form of $\alpha(1-k(x))$, where $\alpha>0$ is the maximum emergence rate, $0\leq k(x)\leq 1$, with $k(0)=0, k'(x)>0$, and $\lim\limits_{x\rightarrow
\infty}k(x)=1$, is the functional response due to
the intraspecific competition \cite{J}.
Moreover, we assume the death rate of larvae be a linear function, denoted by $d_{0}+d_{1}x$, and
the death rate of adults be constant, denoted by $\mu$. Then, in the absence of sterile mosquitoes, we get the following
system of equations:
\begin{equation}\label{sys}
\left\{%
\begin{array}{ll}
    \frac{dx}{dt}=\beta(t) y-\alpha(1-k(x))x-(d_{0}+d_{1}x)x,\\[3mm]
    \frac{dy}{dt}=\alpha(1-k(x))x-\mu y. \\
\end{array}%
\right.\end{equation}
    We further assume a functional response for $k(x)$, as in
    \cite{J}, in the form $$k(x)=\frac{x}{1+x}.$$

The dynamical system  (\ref{sys}) was previously studied in \cite{J} for the case where the birth rate $\beta$ is constant (i.e., $\beta(t)=\beta$ ), in which mosquito adults have no difficulty finding their mates such that no Allee effects are concerned. The discrete-time version of this model was then investigated in \cite{BR1}-\cite{BR4}.

A component Allee effect describes a decrease in any fitness component (e.g., survival, reproduction) as population size or density declines.  A prime example is the decrease in a female's probability of mating with fewer males around. This specific phenomenon is commonly referred to as a "mate-finding Allee effect" (\cite{XF}).

When adult mosquitoes face difficulty finding mates, the model incorporates Allee effects. This is reflected in the adult birth rate, which is given by:
$$\beta(t)=\frac{\beta y(t)}{\gamma+y(t)},$$
where $\beta$ is birth rate, $\gamma$ is the Allee effect constant. Then stage-structured wild mosquito population model is given by:
\begin{equation}\label{system}
\left\{%
\begin{array}{ll}
    \frac{dx}{dt}=\frac{\beta y^2}{\gamma+y}-\frac{\alpha x}{1+x}-(d_{0}+d_{1}x)x,\\[3mm]
    \frac{dy}{dt}=\frac{\alpha x}{1+x}-\mu y. \\
\end{array}%
\right.\end{equation}
This continuous system has been discussed in \cite{J.Li}, but its discrete version has not been investigated as of yet.
Discretization of the continuous-time model offers several advantages for studying mosquito dynamics. It allows simulations to be conducted on discrete time scales, which are more relevant to biological processes. This facilitates a closer alignment with actual mosquito population data and enables the effective integration of time-sensitive ecological components.
Continuous systems can be approached by several discrete methods, including Euler forward/backward and semi-discretization. We utilize the Euler forward difference scheme here.
In this paper, we study the discrete time dynamical systems associated with the system (\ref{system}) by employing Euler's forward discretization method.
Define the operator
$W:{\mathbb{R}^2}\rightarrow {\mathbb{R}^2}$ by
\begin{equation}\label{systema}
\left\{%
\begin{array}{ll}
    x'=\frac{\beta y^2}{\gamma+y}-\frac{\alpha x}{1+x}-(d_{0}+d_{1}x)x+x,\\[3mm]
    y'=\frac{\alpha x}{1+x}-\mu y+y, \\
\end{array}%
\right.\end{equation}
where $\alpha >0, \beta >0, \gamma>0, \mu >0,  \ d_{0}\geq0,\ d_{1}\geq0.$ In the system (\ref{systema}), $x', y'$ means the next state relative to the initial state $x,y$ respectively.

If for each possible state $\textbf{z}=(x,y)\in\mathbb{R}^{2}$ describing the current generation, the state $\textbf{z}'=(x',y')\in\mathbb{R}^{2}$ is uniquely defined as $\textbf{z}'=W(\textbf{z})$ then the map $W:\mathbb{R}^{2}\rightarrow \mathbb{R}^{2}$ called the evolution operator \cite{L}, \cite{Rpd}.

The main problem for a given operator $W$ and arbitrarily initial point $\textbf{z}^{(0)}=(x^{(0)},y^{(0)})\in\mathbb{R}^{2}$  is to describe the limit points of the trajectory $\{\textbf{z}^{(m)}\}_{m=0}^{\infty}$, where $\textbf{z}^{(m)}=W^m(\textbf{z}^{(0)}).$

In \cite{RB} the dynamics of the operator defined by (\ref{systema}) is studied under the condition $d_0=d_1=0$. In this paper we consider the operator $W$ (defined by (\ref{systema})) for the case $d_0>0$.

\section{\bf Preliminaries and Known Results}

Consider systems of difference equations of the form
\begin{equation}\label{xy}
\left\{%
\begin{array}{ll}
    x_{n+1}=f_1(x_n, y_n),\\[2mm]
    y_{n+1}=f_2(x_n, y_n),
\end{array} n=0,1,2,... %
\right.\end{equation}
where $f_1$ and $f_2$ are given functions and the initial condition $(x_0, y_0)$ comes from some considered set in the intersection of the domains of $f_1$ and $f_2$. An operator $W$ that corresponds to the system (\ref{xy}) is defined as $W(x, y)=\left(f_1(x, y), f_2(x, y)\right)$.

Let $\mathfrak{R}$ be a subset of $\mathbb{R}^2$ with a nonempty interior, and let $W:\mathfrak{R}\rightarrow\mathfrak{R}$ be a continuous operator. When the function $f_1(x, y)$ is increasing in $x$ and decreasing in $y$ and the function $f_2(x, y)$ is decreasing in $x$ and increasing in $y$, the system (\ref{xy}) is called competitive. When the function $f_1(x, y)$ is increasing in $x$ and increasing in $y$ and the function $f_2(x, y)$ is increasing in $x$ and increasing in $y$, the system (\ref{xy}) is called cooperative. Competitive and cooperative maps, which are called monotone maps, are defined similarly. Strongly cooperative systems of difference equations or maps are those for which the functions $f_1$ and $f_2$ are coordinate-wise strictly monotone \cite{BBK}.

Let $\preceq$ be a partial order on $\mathbb{R}^n$ with nonnegative cone $P$. For $z_1,z_2\in\mathbb{R}^n$ the order interval $\mathopen{[\![} z_1, z_2 \mathclose{]\!]}$ is the set of all $z$ such that $z_1\preceq z\preceq z_2$. We say $z_1\prec z_2$ if $z_1\preceq z_2$ and $z_1\neq z_2$, and $z_1\ll z_2$ if $z_2-z_1\in int(P)$. An operator $W$ on a subset of $\mathbb{R}^n$ is order-preserving if $W(z_1)\preceq W(z_2)$ whenever $z_1\prec z_2$, strictly order-preserving $W(z_1)\prec W(z_2)$ whenever $z_1\prec z_2$, and strongly order-preserving if $W(z_1)\ll W(z_2)$ whenever $z_1\prec z_2$.

A point $(x^*, y^*)\in\mathbb{R}^{2}$ is called a fixed point of $W$ if $W\left((x^*, y^*)\right)=(x^*, y^*)$.
The basin of attraction of a fixed point $(x^*, y^*)$ of an operator $W$, is defined as the set of all initial points $\left(x^{(0)}, y^{(0)}\right)$ in the domain of $W$ such that the sequence of iterates $W^n\left(x^{(0)}, y^{(0)}\right)$ converges to $(x^*, y^*)$.

The following result is a direct consequence of the Trichotomy Theorem of Dancer and Hess, (see \cite{DH}, \cite{H}, \cite{KM}) proves useful in determining the basins of attraction for the fixed points.
\begin{pro}\label{basin}
If the non-negative cone of a partial ordering $\preceq$ is a generalized quadrant in $\mathbb{R}^n$, and if $W$ has no fixed points in $\mathopen{[\![} z_1, z_2 \mathclose{]\!]}$ other than $z_1$ and $z_2$, then the interior of $\mathopen{[\![} z_1, z_2 \mathclose{]\!]}$ is either a subset of the basin of attraction of $z_1$ or a subset of the basin of attraction of $z_2$.
\end{pro}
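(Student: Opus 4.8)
The plan is to read the statement as a corollary of the Dancer--Hess trichotomy and to spend the actual work on verifying that $W$ and the order interval $\mathopen{[\![} z_1, z_2 \mathclose{]\!]}$ satisfy that theorem's hypotheses. First I would fix the order-theoretic setup. Since the nonnegative cone $P$ of $\preceq$ is a generalized quadrant, $\preceq$ is a coordinatewise order up to a choice of sign in each coordinate, and the monotonicity assumption on the system (that the corresponding map is cooperative, indeed strongly cooperative) says exactly that $W$ is order-preserving: $z \prec w$ implies $W(z) \preceq W(w)$. Because $z_1$ and $z_2$ are fixed, order-preservation immediately yields $z_1 = W(z_1) \preceq W(z) \preceq W(z_2) = z_2$ for every $z \in \mathopen{[\![} z_1, z_2 \mathclose{]\!]}$, so the order interval is forward invariant; it is also closed and bounded, hence compact in $\mathbb{R}^n$, and $W$ is continuous, so all the standing hypotheses of the trichotomy are in place.

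With the framework fixed, the Dancer--Hess theorem (\cite{DH}, \cite{H}, \cite{KM}) asserts that for a continuous order-preserving self-map of the order interval with fixed endpoints $z_1 \prec z_2$, exactly one of three alternatives holds: the interior of $\mathopen{[\![} z_1, z_2 \mathclose{]\!]}$ lies in the basin of attraction of $z_1$; the interior lies in the basin of attraction of $z_2$; or there is a further fixed point $z^{*}$ with $z_1 \prec z^{*} \prec z_2$. The hypothesis that $W$ has no fixed points in $\mathopen{[\![} z_1, z_2 \mathclose{]\!]}$ besides $z_1$ and $z_2$ excludes the third alternative, leaving precisely the asserted dichotomy.

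To indicate why the first two alternatives are the right ones, I would describe the monotone mechanism behind the theorem. The engine is that a point $u$ with $W(u) \succeq u$ generates, by order-preservation, a nondecreasing orbit $W^n(u)$; being trapped in the compact interval it converges, and by continuity its limit is a fixed point, hence $z_1$ or $z_2$. Dually, $W(u) \preceq u$ yields a nonincreasing orbit with the same conclusion. Tracking the extremal orbits started just above $z_1$ and just below $z_2$, and using that there is no intermediate fixed point to stop them, is what collapses the behavior of the whole interior onto a single endpoint.

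The hard part will be exactly this last collapse: a priori the extremal orbit from below could tend to $z_1$ while the one from above tends to $z_2$, which would leave the generic interior point unclassified, so the substance of the result is showing the dichotomy is global rather than orbit-by-orbit. This is the heart of the Dancer--Hess argument and is where the no-interior-fixed-point hypothesis and strict monotonicity are used essentially; the generalized-quadrant structure of the cone is what supplies enough comparable points to build the needed monotone comparison orbits and guarantees their limits remain inside $\mathopen{[\![} z_1, z_2 \mathclose{]\!]}$. In writing the proof I would either invoke the cited trichotomy directly, or, if a self-contained argument is wanted, reproduce this comparison construction and verify the strong order-preservation needed to rule out the split limit.
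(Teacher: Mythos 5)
Your proposal takes essentially the same route as the paper: the paper states this proposition without proof, citing it as a direct consequence of the Dancer--Hess Trichotomy Theorem (\cite{DH}, \cite{H}, \cite{KM}), which is precisely the reduction you carry out, together with the hypothesis verification (monotonicity, compactness of the order interval, forward invariance) that the paper leaves implicit. Your added sketch of the monotone-orbit mechanism and your identification of the genuinely hard step --- passing from the connecting-orbit trichotomy to the global basin dichotomy --- goes beyond what the paper records, but is consistent with the cited sources.
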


Consider the North-East ordering (NE) on $\mathbb{R}^2$ for which the positive cone is the first quadrant, i.e., this partial ordering is defined by $(x_1, y_1)\preceq_{NE}(x_2, y_2)$ if $x_1\leq x_2$ and $y_1\leq y_2$. The South-East (SE) ordering defined as $(x_1, y_1)\preceq_{SE}(x_2, y_2)$ if $x_1\leq x_2$ and $y_1\geq y_2$.

A map $W$ on a nonempty set $\mathfrak{R}\subset\mathbb{R}^2$ which is monotone with respect to the $NE$ ordering is called cooperative and a map monotone with respect to the $SE$ ordering is called competitive.

We define the non-negative quadrant of the plane as $\mathbb{R}_{+}^{2}=\{(x,y): x,y\in\mathbb{R}, x\geq0, y\geq0\}$. The operator $W$ is well-defined on $\mathbb{R}^2$ except for the lines $x=-1$ and $y=-\gamma$. However, to model a population dynamics system using a continuous operator, we require both population values (represented by $x$ and $y$) to be non-negative. Therefore, we restrict our analysis to $\mathbb{R}^2_{+}$ and choose the parameters of the operator $W$ such that it maps this set onto itself.

\begin{lemma}\label{0-0} If
\begin{equation}\label{parametr}
\alpha>0,\ \beta>0,\ \gamma>0,\ 0<\mu\leq1,\ d_{0}>0,\ \alpha+d_{0}\leq1,\ d_{1}=0
\end{equation}
then the operator (\ref{systema}) maps the set $\mathbb{R}_{+}^{2}$ to itself.
\end{lemma}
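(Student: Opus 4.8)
The plan is to verify separately that each coordinate of $W(x,y)$ is non-negative whenever $(x,y)\in\mathbb{R}_+^2$, after substituting $d_1=0$ into (\ref{systema}). Since the two coordinates decouple in the relevant estimates, I would treat them independently, starting with the easier second component.

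For the second coordinate, regrouping gives $y'=\frac{\alpha x}{1+x}+(1-\mu)y$. For $x\geq 0$ the fraction $\frac{\alpha x}{1+x}$ is non-negative because $\alpha>0$ and $1+x>0$; and since $0<\mu\leq 1$ the coefficient $1-\mu$ is non-negative, so $(1-\mu)y\geq 0$ for $y\geq 0$. Hence $y'\geq 0$ immediately, and here only the hypothesis $\mu\leq 1$ is used.

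The first coordinate is the main obstacle, since the subtracted terms $-\frac{\alpha x}{1+x}-d_0x$ could in principle push $x'$ negative. The key is to isolate the manifestly non-negative $y$-part and factor the rest, writing $x'=\frac{\beta y^2}{\gamma+y}+x\left(1-d_0-\frac{\alpha}{1+x}\right)$. The first summand is non-negative for $y\geq 0$ because $\beta>0$ and $\gamma+y>0$. For the second summand I would invoke the elementary bound $\frac{\alpha}{1+x}\leq\alpha$, valid for all $x\geq 0$ since $1+x\geq 1$, which yields $1-d_0-\frac{\alpha}{1+x}\geq 1-d_0-\alpha\geq 0$, the final inequality being exactly the hypothesis $\alpha+d_0\leq 1$. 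As $x\geq 0$, the whole product is non-negative, so $x'\geq 0$.

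Combining the two bounds shows $W(x,y)\in\mathbb{R}_+^2$, which completes the argument. The structural hypotheses enter in a transparent way: $\mu\leq 1$ secures the sign of the linear term in $y'$, while $\alpha+d_0\leq 1$ is precisely what keeps the factor $1-d_0-\frac{\alpha}{1+x}$ non-negative uniformly in $x\geq 0$. I expect no further subtlety, since setting $d_1=0$ removes the quadratic death term $-d_1x^2$ that would otherwise have to be absorbed against the linear term $x$ and would no longer admit the clean uniform bound.
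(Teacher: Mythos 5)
Your proof is correct and is exactly the ``direct computation'' the paper leaves to the reader: the regrouping you use, $x'=\frac{\beta y^2}{\gamma+y}+x\left(1-d_0-\frac{\alpha}{1+x}\right)$ and $y'=\frac{\alpha x}{1+x}+(1-\mu)y$, is precisely the form the paper itself adopts in the subsequent system defining $W_0$, and your use of $\alpha+d_0\leq 1$ and $\mu\leq 1$ matches the intended argument. Nothing is missing.
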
        
\begin{proof} This can be proved by direct computation.
\end{proof}
Lemma \ref{0-0} provides sufficient conditions for non-negative values of $x$ and $y$.
In this case the system (\ref{systema}) becomes
\begin{equation}\label{systemacase}
W_{0}:\left\{%
\begin{array}{ll}
    x'=\frac{\beta y^2}{\gamma+y}+\left(1-d_0-\frac{\alpha}{1+x}\right)x,\\[3mm]
    y'=\frac{\alpha x}{1+x}+(1-\mu)y. \\
\end{array}%
\right.\end{equation}

\begin{defn}\label{d1}
(see \cite{D}) A fixed point $z$ of an operator $W_0$ is called
\texttt{hyperbolic} if its Jacobian $J$ at $z$ has no eigenvalues on the
unit circle.
\end{defn}

\begin{defn}\label{d2}
(see \cite{D}) A hyperbolic fixed point $z$ is called:
\begin{itemize}
\item[1)] \texttt{attracting} if all the eigenvalues of the Jacobian matrix $J(z)$
are less than 1 in absolute value;

\item[2)] \texttt{repelling} if all the eigenvalues of the Jacobian matrix $J(z)$
are greater than 1 in absolute value;

\item[3)] a \texttt{saddle} otherwise.
\end{itemize}
\end{defn}
To investigate the local stability analysis of the fixed point of system (\ref{systemacase}), we present the following lemma \cite{ChX}.
\begin{lemma}\label{F(l)}
Let $F(\lambda)=\lambda^2+B\lambda+C,$ where $B$ and $C$ are two real constants. Suppose $\lambda_1$ and $\lambda_2$ are two roots of $F(\lambda)=0$. Then the following statements hold.
\begin{enumerate}
  \item[(i)] If $F(1)>0$ then
  \item[(i.1)] $|\lambda_1|<1$ and $|\lambda_2|<1$ if and only if $F(-1)>0$ and $C<1;$
  \item[(i.2)] $\lambda_1=-1$ and $\lambda_2\neq-1$ if and only if $F(-1)=0$ and $B\neq2;$
  \item[(i.3)] $|\lambda_1|<1$ and $|\lambda_2|>1$ if and only if $F(-1)<0;$
  \item[(i.4)] $|\lambda_1|>1$ and $|\lambda_2|>1$ if and only if $F(-1)>0$ and $C>1;$
  \item[(i.5)] $\lambda_1$ and $\lambda_2$ are a pair of conjugate complex roots and $|\lambda_1|=|\lambda_2|=1$ if only if $-2<B<2$ and $C=1;$
  \item[(i.6)] $\lambda_1=\lambda_2=-1$ if only if $F(-1)=0$ and $B=2.$
  \item[(ii)] If $F(1)=0,$ namely, 1 is one root of $F(\lambda)=0,$ then the other root $\lambda$ satisfies $|\lambda|=(<, >)1$  if and only if $|C|=(<, >)1.$
  \item[(iii)] If $F(1)<0$ then $F(\lambda)=0$ has one root lying in $(1;\infty).$ Moreover,
  \item[(iii.1)] the other root $\lambda$ satisfies $\lambda<(=)-1$ if and only if $F(-1)<(=)0;$
  \item[(iii.2)] the other root $\lambda$ satisfies $-1<\lambda<1$ if and only if $F(-1)>0.$
\end{enumerate}
\end{lemma}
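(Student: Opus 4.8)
The plan is to translate every statement about the moduli of $\lambda_1,\lambda_2$ into sign conditions via Vieta's formulas. Since $\lambda_1+\lambda_2=-B$ and $\lambda_1\lambda_2=C$, I would first record the three identities
$$F(1)=(1-\lambda_1)(1-\lambda_2),\qquad F(-1)=(1+\lambda_1)(1+\lambda_2),\qquad C=\lambda_1\lambda_2,$$
which form the backbone of the whole argument. An immediate consequence is that $F(1)>0$ together with $F(-1)>0$ forces $C>-1$ (add $F(1)=1+B+C$ and $F(-1)=1-B+C$); this is precisely why in (i.1) the single inequality $C<1$ already upgrades to $|C|<1$, so no separate lower bound on $C$ is needed.

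Next I would split according to the sign of the discriminant $B^2-4C$. In the complex-conjugate case $\lambda_2=\overline{\lambda_1}$ (equivalently $B^2-4C<0$) one has $F(1)=|1-\lambda_1|^2>0$ and $F(-1)=|1+\lambda_1|^2>0$ automatically, and $|\lambda_1|=|\lambda_2|=\sqrt{C}$. Hence under the hypothesis $F(1)>0$ the position of the common modulus relative to $1$ is read off directly from $C$, which disposes of the complex instances of (i.1) (namely $C<1$), (i.4) ($C>1$) and (i.5) ($C=1$); in the last case the extra requirement $-2<B<2$ is exactly the condition $B^2-4<0$ that keeps the pair genuinely complex and on the unit circle, rather than degenerating into the double real root $-1$ or $1$.

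For real roots I would use the product identities to convert each subcase into the placement of $\lambda_1,\lambda_2$ relative to $\pm1$. The sign of $F(1)$ tells whether the two roots lie on the same side of $1$ or on opposite sides; when $F(1)>0$ both roots are $<1$ or both are $>1$, and then the sign of $F(-1)$ distinguishes whether $-1$ separates them while $C$ distinguishes (i.1) from (i.4). The one point that needs care is the mixed case (i.3): with $F(1)>0$ a real root of modulus $>1$ must be the negative one, for if it were $>1$ the factor $(1-\lambda)$ would flip the sign of $F(1)$; thus $|\lambda_1|<1<|\lambda_2|$ forces $\lambda_2<-1$ and hence $F(-1)=(1+\lambda_1)(1+\lambda_2)<0$. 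Case (iii) is the dual situation $F(1)<0$, where the roots are necessarily real and straddle $1$; writing $\lambda_1>1$, the factor $1+\lambda_1>0$ makes the sign of $F(-1)$ coincide with the sign of $1+\lambda_2$, which immediately yields (iii.1) and (iii.2).

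Finally the degenerate cases (i.2), (i.6) and (ii) follow by substituting the claimed root. If $F(-1)=0$ then $-1$ is a root and Vieta gives the companion root $1-B$, so it equals $-1$ exactly when $B=2$, separating (i.2) from (i.6); if $F(1)=0$ then $1$ is a root and the companion is $C$, giving (ii) at once. I expect no deep obstacle here, since the result is a Schur--Cohn/Jury-type criterion whose real content lies entirely in the three factorization identities. The only genuinely delicate bookkeeping is keeping the real and complex branches separate and tracking the sign of the outside root in the mixed subcases, so that the modulus conditions and the sign conditions on $F(\pm1)$ match up in every case.
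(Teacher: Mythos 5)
Your proposal is correct, but there is no internal argument in the paper to compare it against: the paper states this lemma without proof, quoting it as a known result from Wang and Li \cite{ChX}. Your argument is the standard Schur--Cohn/Jury derivation and is sound. The three factorizations $F(1)=(1-\lambda_1)(1-\lambda_2)$, $F(-1)=(1+\lambda_1)(1+\lambda_2)$, $C=\lambda_1\lambda_2$ do carry all the content; the discriminant split cleanly disposes of the complex case, where $|\lambda_1|^2=C$ settles (i.1), (i.4), (i.5), with $-2<B<2$ being exactly $B^2-4C<0$ when $C=1$; in the real case the signs of $F(\pm1)$ reduce the configurations to both roots in $(-1,1)$, both $>1$, or both $<-1$, and the hypothesis on $C$ eliminates the unwanted two --- this is the one place where the bookkeeping genuinely matters, and your handling of it (including the remark that $F(1)>0$ and $F(-1)>0$ jointly force $C>-1$, so no separate lower bound on $C$ is needed in (i.1)) is right. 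The mixed case (i.3), case (iii) (where $F(1)<0$ forces real roots, since a conjugate pair gives $F(1)=|1-\lambda_1|^2>0$), and the degenerate cases (i.2), (i.6), (ii) via the companion roots $1-B$ and $C$ are all handled correctly. The only caveat is that yours is an outline rather than a fully written-out case analysis; as it stands it supplies a self-contained justification for a statement the paper merely imports, which is a reasonable thing to add but goes beyond what the paper itself does.
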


\section{\bf Analyzing the Existence and Local Stability of Fixed Points}

We begin by investigating the existence of fixed points.

The fixed points of the system (\ref{systemacase}) satisfy the following system of equations:
\begin{equation}\label{Fsystema}
\begin{array}{ll}
    x=\frac{\beta y^2}{\gamma+y}+\left(1-d_0-\frac{\alpha}{1+x}\right)x,\\[3mm]
    y=\frac{\alpha x}{1+x}+(1-\mu)y. \\
\end{array}%
\end{equation}
It can be seen that the point $(0,0)$ is a solution to system (\ref{Fsystema}).
By solving the second equation of system (\ref{Fsystema}) for $y$ and substituting it into the first equation, we obtain the following equation:
\begin{equation}\label{x^3}
d_0\mu(\alpha+\gamma\mu)x^2+\left(\mu\left(d_0\gamma\mu+(\alpha+d_0)(\alpha+\gamma\mu)\right)-\alpha^2\beta\right)x+\gamma\mu^2(\alpha+d_0)=0.
\end{equation}
We denote
\begin{equation}
\begin{array}{ll}
D=\left(\mu(d_0\gamma\mu+(\alpha+d_0)(\alpha+\gamma\mu))-\alpha^2\beta\right)^2-4d_0\gamma\mu^3(\alpha+d_0)(\alpha+\gamma\mu),\\[3mm]
\nu=\frac{\mu}{\alpha^2}\left(\sqrt{d_0\gamma\mu}+\sqrt{(\alpha+d_0)(\alpha+\gamma\mu)}\right)^2.
\end{array}%
\end{equation}
Conditions in (\ref{parametr}) ensure $d_0\mu(\alpha+\gamma\mu)>0$ and $\gamma\mu^2(\alpha+d_0)>0$. For positive roots in equation (\ref{x^3}), we additionally require $\mu\left(d_0\gamma\mu+(\alpha+d_0)(\alpha+\gamma\mu)\right)-\alpha^2\beta<0$ and $D>0$.
From here we get $\beta>\nu.$ The number of solutions to equation (\ref{x^3}) depends on the relationship between $\beta$ and $\nu$. If $\beta=\nu$, equation (\ref{x^3}) has one solution. If $\beta<\nu$, equation (\ref{x^3}) has no solution.

The following proposition holds for fixed points of the operator $W_0$:
\begin{pro}\label{fixed} The fixed points for (\ref{systemacase}) are as follows:
\begin{itemize}
  \item If $\beta<\nu$ then the operator (\ref{systemacase}) has a unique fixed point $z_1=(0,0).$
  \item If $\beta=\nu$ then mapping (\ref{systemacase}) has two
fixed points $$z_{1,1}=(0,0), \ \ z_{1,2}=\left(x^*_1, y^*_1\right).$$
\item If $\beta>\nu$ then mapping (\ref{systemacase}) has three
fixed points $$z_{2,1}=(0,0), \ \ z_{2,2}=\left(x^*_2, y^*_2\right), \ \ z_{2,3}=\left(x^*_3, y^*_3\right),$$
\end{itemize}
where
\begin{equation}\label{fix123}
\begin{array}{llll}
x^*_1=\sqrt{\frac{\gamma\mu(\alpha+d_0)}{d_0(\alpha+\gamma\mu)}}, \\[3mm]
x^*_2=\frac{\alpha^2\beta-\mu(d_0\gamma\mu+(\alpha+d_0)(\alpha+\gamma\mu))-\sqrt{D}}{2d_0\mu(\alpha+\gamma\mu)},\\[3mm]
x^*_3=\frac{\alpha^2\beta-\mu(d_0\gamma\mu+(\alpha+d_0)(\alpha+\gamma\mu))+\sqrt{D}}{2d_0\mu(\alpha+\gamma\mu)},\\[3mm]
y^*_i=\frac{\alpha x^*_i}{\mu(1+x^*_i)}, \ i=1,2,3.
\end{array}
\end{equation}
\end{pro}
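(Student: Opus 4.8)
The plan is to reduce the fixed-point system (\ref{Fsystema}) to a single quadratic in $x$ and then run a discriminant-based case analysis. First I would note by direct substitution that $(0,0)$ solves (\ref{Fsystema}) for every admissible parameter choice, so the origin is always a fixed point. Next I would exploit the second equation: rearranging $y=\frac{\alpha x}{1+x}+(1-\mu)y$ gives $\mu y=\frac{\alpha x}{1+x}$, hence $y=\frac{\alpha x}{\mu(1+x)}$, which is exactly the relation $y_i^*=\frac{\alpha x_i^*}{\mu(1+x_i^*)}$ asserted in the statement. This eliminates $y$ and reduces the problem to locating the nonnegative roots $x$ of the first equation.

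Substituting $y=\frac{\alpha x}{\mu(1+x)}$ into the first equation of (\ref{Fsystema}) and clearing denominators, I would factor out the common $x$ (whose vanishing recovers the origin) and arrive at the quadratic (\ref{x^3}) for the nonzero roots. Writing it as $Ax^2+Bx+C=0$ with $A=d_0\mu(\alpha+\gamma\mu)$, $B=\mu(d_0\gamma\mu+(\alpha+d_0)(\alpha+\gamma\mu))-\alpha^2\beta$, and $C=\gamma\mu^2(\alpha+d_0)$, the conditions (\ref{parametr}) guarantee $A>0$ and $C>0$. By Vieta's relations the product of the two roots equals $C/A>0$, so both roots share the same sign; a pair of positive roots therefore requires simultaneously $B<0$ (positive sum) and $D=B^2-4AC\geq0$ (real roots).

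The conceptual core is to translate these two inequalities into the single threshold $\beta=\nu$. Solving $D=0$ for $\beta$ yields the critical values $\beta_\pm=\alpha^{-2}(K\pm\sqrt{L})$, where $K=\mu(d_0\gamma\mu+(\alpha+d_0)(\alpha+\gamma\mu))$ and $L=4d_0\gamma\mu^3(\alpha+d_0)(\alpha+\gamma\mu)$; expanding the square in the definition of $\nu$ shows $\alpha^2\nu=K+\sqrt{L}$, so $\nu=\beta_+$. Since $\beta_-<K/\alpha^2<\beta_+=\nu$, the requirement $B<0$ (equivalently $\beta>K/\alpha^2$) rules out the branch $\beta\leq\beta_-$, where $D\geq0$ but $B>0$ forces two negative roots. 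This leaves exactly three regimes: $\beta>\nu$, where $D>0$ and $B<0$ give two distinct positive roots $x_2^*,x_3^*$; $\beta=\nu$, where $D=0$ and $B<0$ give a single double positive root $x_1^*$; and $\beta<\nu$, where either $D<0$ (no real roots) or $B>0$ (negative roots), so no positive root arises. This is precisely the claimed trichotomy.

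I expect the main obstacle to be purely computational: verifying that the quadratic-formula expressions $\frac{\alpha^2\beta-K\pm\sqrt{D}}{2A}$ reproduce the stated $x_2^*,x_3^*$, and in particular that the double root $-B/(2A)=\sqrt{L}/(2A)$ at $\beta=\nu$ collapses to the closed form $x_1^*=\sqrt{\frac{\gamma\mu(\alpha+d_0)}{d_0(\alpha+\gamma\mu)}}$. This simplification requires combining the radical in $\sqrt{L}$ with $A$ and canceling the common factor $d_0(\alpha+\gamma\mu)$ under the root; I would carry it out by squaring to avoid sign ambiguities. Finally, since every positive $x$-root yields a positive $y$ through $y=\frac{\alpha x}{\mu(1+x)}$, each admissible root corresponds to a genuine fixed point in $\mathbb{R}_{+}^{2}$, completing the count.
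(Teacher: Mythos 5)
Your proposal is correct and follows essentially the same route as the paper: eliminate $y$ via the second fixed-point equation (giving $y^*=\frac{\alpha x^*}{\mu(1+x^*)}$), reduce to the quadratic (\ref{x^3}), and classify its positive roots by the sign of the middle coefficient together with the sign of the discriminant $D$. Your write-up is in fact more complete than the paper's, which simply asserts that the two conditions yield $\beta>\nu$ and that the cases $\beta=\nu$, $\beta<\nu$ give one and zero solutions respectively; your identification of $\nu$ with the larger root $\beta_+$ of $D=0$, and your Vieta-based exclusion of the branch $\beta\le\beta_-$ (where $D\ge0$ but both roots are negative), supply exactly the justification the paper leaves implicit.
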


Now we shall examine the type of the fixed point.

To determine the type of a fixed point $z=(x^*,y^*)$ of the operator defined by (\ref{systemacase}), we can analyze the Jacobian matrix:

$$J_{W_{0}}=\left(%
\begin{array}{cc}
  1-d_0-\frac{\alpha}{(1+x)^2} & \frac{\beta y(2\gamma+y)}{(\gamma+y)^2} \\
  \frac{\alpha}{(1+x)^2} & 1-\mu \\
\end{array}%
\right).$$

The characteristic equation is
\begin{equation}\label{xar.eq}
F(\lambda)=\lambda^2+B\lambda+C,
\end{equation}
where $$B=\mu+d_0+\frac{\alpha}{(1+x^*)^2}-2,$$ $$C=1-\mu-d_0-\frac{\alpha}{(1+x^*)^2}+\mu d_0+\frac{\alpha}{(1+x^*)^2}\left(\mu-\frac{\beta y^*(2\gamma+y^*)}{(\gamma+y^*)^2}\right).$$

From (\ref{xar.eq}) we get
\[F(1)=1+B+C=\mu\left(d_0+\frac{\alpha}{(1+x^*)^2}\right)-\frac{\alpha}{(1+x^*)^2}\cdot\frac{\beta y^*}{\gamma+y^*}\left(1+\frac{\gamma}{\gamma+y^*}\right),\]
\[F(-1)=1-B+C=4-2\left(\mu+d_0+\frac{\alpha}{(1+x^*)^2}\right)+\mu\left(d_0+\frac{\alpha}{(1+x^*)^2}\right)-\] \[\frac{\alpha}{(1+x^*)^2}\cdot\frac{\beta y^*}{\gamma+y^*}\left(1+\frac{\gamma}{\gamma+y^*}\right).\]

$\textbf{(1)}$ Let $\beta<\nu$, i.e., $x^*=0, y^*=0.$ Then \[F(1)=\mu(\alpha+d_0)>0,\ C=(1-d_0-\alpha)(1-\mu)<1,\]
\[F(-1)=2(2-\mu-d_0-\alpha)+\mu(d_0+\alpha)>0.\]
According to item (i.1) of Lemma \ref{F(l)}, $|\lambda_{1,2}|<1.$ Therefore, the fixed point $(0,0) $is attracting.

$\textbf{(2)}$ Let $\beta=\nu$, i.e., $x^*=x_1^*=\sqrt{\frac{\gamma\mu(\alpha+d_0)}{d_0(\alpha+\gamma\mu)}}.$ Then
\[F(1)=\mu\left(d_0+\frac{\alpha}{(1+x^*)^2}\right)-\frac{\alpha}{(1+x^*)^2}\cdot\frac{\beta y^*}{\gamma+y^*}\left(1+\frac{\gamma}{\gamma+y^*}\right)\]
\[=\mu\left(d_0+\frac{\alpha}{(1+x^*)^2}\right)-\frac{\mu y^*}{(1+x^*)x^*}\cdot\frac{\beta y^*}{\gamma+y^*}\left(1+\frac{\gamma}{\gamma+y^*}\right)\]
\[=\frac{\mu}{(1+x^*)x^*}\left(d_0x^*(1+x^*)+\frac{\alpha x^*}{1+x^*}\right)-\frac{\mu y^*}{(1+x^*)x^*}\cdot\frac{\beta y^*}{\gamma+y^*}\left(1+\frac{\gamma}{\gamma+y^*}\right)\]
\[=\frac{\mu}{(1+x^*)x^*}\left(d_0x{^*}^2+\frac{\beta y{^*}^2}{\gamma+y^*}\right)-\frac{\mu y^*}{(1+x^*)x^*}\cdot\frac{\beta y^*}{\gamma+y^*}\left(1+\frac{\gamma}{\gamma+y^*}\right)\]
\[=\frac{\mu}{(1+x^*)x^*}\left(d_0x{^*}^2-\frac{\beta\gamma y{^*}^2}{(\gamma+y^*)^2}\right)=\frac{\mu}{(1+x^*)x^*}\left(d_0x{^*}^2-\frac{\beta\gamma\alpha^2 x{^*}^2}{(\gamma\mu+(\alpha+\gamma\mu)x^*)^2}\right)\]
\[=\frac{\mu x{^*}^2}{(1+x^*)x^*}\left(d_0-\frac{\beta\gamma\alpha^2}{(\gamma\mu+(\alpha+\gamma\mu)x^*)^2}\right)\]
\[=\frac{\mu x^*}{1+x^*}\left(d_0-\frac{\gamma \alpha^2\mu\left(\frac{\sqrt{d_0\gamma\mu}+\sqrt{(\alpha+d_0)(\alpha+\gamma\mu)}}{\alpha}\right)^2}{\left(\gamma\mu+(\alpha+\gamma\mu)\sqrt{\frac{\gamma\mu(\alpha+d_0)}{d_0(\alpha+\gamma\mu)}}\right)^2}
\right)=\frac{\mu x^*}{1+x^*}\left(d_0-d_0\right)=0,\]
\[-1<C=1-\mu-d_0-\frac{\alpha}{(1+x^*)^2}+F(1)=1-\mu-d_0-\frac{\alpha}{(1+x^*)^2}<1.\]
According to item (ii) of Lemma \ref{F(l)}, $\lambda_{1}=1$, $|\lambda_{2}|<1.$ So, the fixed point $(x_1^*,y_1^*) $ is non-hyperbolic.

$\textbf{(3)}$ Let $\beta>\nu$, i.e., $x^*=x_{2,3}^*$. Then
\[F(1)=\frac{\mu x{^*}^2}{(1+x^*)x^*}\left(d_0-\frac{\beta\gamma\alpha^2}{(\gamma\mu+(\alpha+\gamma\mu)x^*)^2}\right)\]
\[=\frac{\mu x^*}{1+x^*}\left(d_0-\frac{\beta\gamma \alpha^2}{\left(\gamma\mu+(\alpha+\gamma\mu)\cdot\frac{\alpha^2\beta-\mu(d_0\gamma\mu+(\alpha+d_0)(\alpha+\gamma\mu))\pm\sqrt{D}}{2d_0\mu(\alpha+\gamma\mu)}\right)^2}\right)\]
\[=\frac{\mu d_0x^*}{1+x^*}\cdot\frac{D\pm\sqrt{D^2+4\alpha^2\beta\gamma\mu^2d_0D}}{D+2\alpha^2\beta\gamma\mu^2d_0\pm\sqrt{D^2+4\alpha^2\beta\gamma\mu^2d_0}}.\]

If $x^*=x_{3}^*$, then $F(1)>0$, $F(-1)>0$, $C<1.$ According to item (i1) of Lemma \ref{F(l)}, $|\lambda_{1,2}|<1.$  Therefore, the fixed point $(x_3^*,y_3^*) $ is attracting.
If $x^*=x_{2}^*$, then $F(1)<0$, $F(-1)<0.$ According to item (iii1) of Lemma \ref{F(l)}, $\lambda_{1}>1$, $\lambda_{2}<-1.$  So, the fixed point $(x_2^*,y_2^*) $ is repelling.

Thus for the type of fixed points the following theorem holds.

\begin{thm}\label{type} The type of the fixed points for (\ref{systemacase}) are as follows:
\begin{itemize}
  \item[i)] if $\beta<\nu$,  then the operator (\ref{systemacase}) has unique fixed point $(0,0)$ which is attracting.
  \item[ii)]if $\beta=\nu$, then the operator (\ref{systemacase}) has two fixed points $(0,0)$, $(x_1^*,y_1^*)$, and the point $(0,0)$ is attracting, the point $(x_1^*,y_1^*)$ is non-hyperbolic (but $(x_1^*,y_1^*)$ is semi-attracting \footnote{meaning that the second eigenvalue is less than 1 in absolute value.}).
  \item[iii)]if $\beta>\nu$, then the operator (\ref{systemacase}) has three fixed points $(0,0)$, $(x_{2}^*,y_{2}^*)$, $(x_{3}^*,y_{3}^*)$, and the points $(0,0)$ and $(x_{3}^*,y_{3}^*)$ are attracting, the point $(x_2^*,y_2^*)$ is repelling.
  \end{itemize}
where $(x_1^*,y_1^*)$, $(x_2^*,y_2^*)$ and $(x_3^*,y_3^*)$ are defined by (\ref{fix123}).
\end{thm}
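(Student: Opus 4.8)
The plan is to classify each fixed point by locating the two eigenvalues of the Jacobian $J_{W_{0}}$ relative to the unit circle, which by Definition \ref{d2} is exactly what decides whether the point is attracting, repelling, or a saddle. Since the characteristic polynomial has the form $F(\lambda)=\lambda^{2}+B\lambda+C$, I would avoid computing the eigenvalues explicitly and instead feed the three quantities $F(1)$, $F(-1)$, and $C$ into Lemma \ref{F(l)}, whose items convert sign information about these quantities directly into the location of $\lambda_{1},\lambda_{2}$. Thus the whole argument reduces to evaluating $F(1)$, $F(-1)$, and $C$ at each fixed point supplied by Proposition \ref{fixed} and reading off the correct item of Lemma \ref{F(l)}.

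First I would record $J_{W_{0}}$ at a generic fixed point $(x^{*},y^{*})$, extract the coefficients $B$ and $C$, and form $F(1)=1+B+C$ and $F(-1)=1-B+C$. The decisive simplification is to eliminate $y^{*}$ and the birth term $\beta y^{*}/(\gamma+y^{*})$ using the fixed-point relations in (\ref{Fsystema}): substituting $y^{*}=\frac{\alpha x^{*}}{\mu(1+x^{*})}$ collapses $F(1)$ to the compact form
\[
F(1)=\frac{\mu x^{*}}{1+x^{*}}\left(d_{0}-\frac{\beta\gamma\alpha^{2}}{\left(\gamma\mu+(\alpha+\gamma\mu)x^{*}\right)^{2}}\right),
\]
so that the sign of $F(1)$ is governed by a single bracketed expression in $x^{*}$. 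This is the reduction that makes all three regimes tractable.

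With this in hand I would treat the regimes separately. For $\beta<\nu$ the only fixed point is $(0,0)$; here $F(1)=\mu(\alpha+d_{0})>0$, while the hypotheses $\alpha+d_{0}\le1$ and $0<\mu\le1$ from (\ref{parametr}) give $F(-1)>0$ and $C=(1-d_{0}-\alpha)(1-\mu)<1$, so item (i.1) yields $|\lambda_{1,2}|<1$ and the point is attracting; the same computation makes $(0,0)$ attracting in the remaining regimes. For the nontrivial fixed points I would evaluate the bracket above at the relevant root: at $x_{1}^{*}=\sqrt{\gamma\mu(\alpha+d_{0})/(d_{0}(\alpha+\gamma\mu))}$ together with $\beta=\nu$ the bracket vanishes, giving $F(1)=0$, and since $C<1$ item (ii) forces $\lambda_{1}=1$, $|\lambda_{2}|<1$, i.e.\ non-hyperbolic and semi-attracting. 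For $\beta>\nu$ I would substitute the explicit roots $x_{2}^{*},x_{3}^{*}$ to express $F(1)$ through the discriminant $D$, obtaining $F(1)>0$ at the larger root $x_{3}^{*}$ and $F(1)<0$ at the smaller root $x_{2}^{*}$; combined with the matching signs of $F(-1)$, item (i.1) makes $(x_{3}^{*},y_{3}^{*})$ attracting while item (iii.1) makes $(x_{2}^{*},y_{2}^{*})$ repelling.

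The main obstacle will be the algebra in the degenerate case $\beta=\nu$: verifying that $d_{0}-\frac{\beta\gamma\alpha^{2}}{(\gamma\mu+(\alpha+\gamma\mu)x^{*})^{2}}$ is exactly zero at $x^{*}=x_{1}^{*}$ requires inserting both the closed form of $x_{1}^{*}$ and the definition of $\nu$ and checking a nontrivial cancellation. Equally delicate is the sign bookkeeping for $x_{2}^{*},x_{3}^{*}$ in case (iii), where $F(1)$ and $F(-1)$ must be rewritten as fractions whose signs are controlled by $D$ and $\sqrt{D}$; only after this rewriting does it become clear that the smaller root gives $F(-1)<0$, so that the second eigenvalue lies below $-1$ rather than inside the disk. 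Confirming the $F(-1)$ inequalities throughout, where the constraints $\mu\le1$ and $\alpha+d_{0}\le1$ are essential, is the routine but error-prone part of the argument.
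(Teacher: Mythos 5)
Your proposal reproduces the paper's proof essentially line by line: the same Jacobian and characteristic polynomial $F(\lambda)=\lambda^{2}+B\lambda+C$, the same use of the fixed-point identities to collapse $F(1)$ to $\frac{\mu x^{*}}{1+x^{*}}\left(d_{0}-\frac{\beta\gamma\alpha^{2}}{(\gamma\mu+(\alpha+\gamma\mu)x^{*})^{2}}\right)$, and the same invocation of Lemma \ref{F(l)} in each regime. Cases (i) and (ii), and the classification of $(0,0)$ and $(x_{3}^{*},y_{3}^{*})$, go through exactly as you outline (for $x_{3}^{*}$ note that $F(-1)\geq F(1)>0$ is automatic because $B\leq 0$ under (\ref{parametr}), and $C<1$ follows from $F(1)\leq\mu\left(d_{0}+\frac{\alpha}{(1+x^{*})^{2}}\right)$).

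The genuine gap is the claim for $(x_{2}^{*},y_{2}^{*})$, and it is not the ``routine but error-prone'' verification you defer: the inequality $F(-1)<0$ at $x_{2}^{*}$ is simply not a consequence of (\ref{parametr}) and $\beta>\nu$. Take the paper's own parameters from Example \ref{exam2}: $\alpha=0.4$, $\beta=9$, $\gamma=1$, $d_{0}=0.5$, $\mu=0.6$, which satisfy (\ref{parametr}) and $\beta>\nu$, with $(x_{2}^{*},y_{2}^{*})=(3/5,1/4)$. There $B=-0.74375$ and $C=-0.36875$, hence $F(1)=-0.1125<0$ but $F(-1)=1-B+C=1.375>0$; by item (iii.2) of Lemma \ref{F(l)} the second eigenvalue lies in $(-1,1)$ (numerically $\lambda_{1}\approx 1.084$, $\lambda_{2}\approx -0.340$), so $(x_{2}^{*},y_{2}^{*})$ is a \emph{saddle}, not repelling. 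Conversely, for admissible parameters such as $\mu=1$, $\alpha=d_{0}=0.5$, $\gamma=1$, $\beta=100$ one gets $F(1)<0$ and $F(-1)<0$, and the point genuinely is repelling. So the sign of $F(-1)$ at $x_{2}^{*}$ depends on the parameters, and no rewriting in terms of $D$ and $\sqrt{D}$ can yield $F(-1)<0$ uniformly; what $F(1)<0$ alone gives is only that one eigenvalue exceeds $1$, i.e.\ that $(x_{2}^{*},y_{2}^{*})$ is unstable. To be fair, the paper's proof has exactly the same defect --- it asserts ``$F(1)<0$, $F(-1)<0$'' without computation, and its own numerical example contradicts the second inequality --- so you have faithfully reproduced the published argument, gap included; but as a proof of part (iii) as stated, this step fails.
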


%
%

\section{\textbf{Global Dynamics of the Operator $W_0$}}

In this section for any initial point $\left(x^{(0)}, y^{(0)}\right)\in \mathbb{R}^{2}_{+}$ we investigate behavior of the trajectories $\left(x^{(n)},y^{(n)}\right)=W_0^n\left(x^{(0)}, y^{(0)}\right), n\geq1.$

Let
\begin{equation}\label{Omega12}
\Omega=[0,\omega_1]\times[0,\omega_2]
\end{equation}
with
\[\omega_1=\frac{\alpha^2\beta}{\mu d_{0}(\alpha+\gamma\mu)}, \ \omega_2=\frac{\alpha}{\mu}.\]

\begin{defn}(see \cite{NW})
The set $A$ is called the absorbing set for the operator $W_0$ if it satisfies the following properties:
\begin{itemize}
  \item [1)] Invariance: the set $A$ is invariant with respect to $W_{0}$, i.e., $W_{0}(A)\subset A$;
  \item [2)] Eventual absorption: for any initial point $\left(x^{(0)}, y^{(0)}\right)\in \mathbb{R}^{2}_{+}$, there exists $k_0\in \mathbb{N}$ such that $W_0^k\left(x^{(0)}, y^{(0)}\right)\in A$ for all $k\geq k_0.$
\end{itemize}
\end{defn}
\begin{lemma}\label{ab.set}(Absorbing set).The set $\Omega$ is an absorbing set for the operator $W_0$ given by (\ref{systemacase}). 
\end{lemma}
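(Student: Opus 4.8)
The plan is to verify the two defining properties of an absorbing set in turn, exploiting throughout that each coordinate map is an affine-type contraction whose relevant fixed value is pinned \emph{exactly} at $\omega_1$ or $\omega_2$. For \textbf{invariance} ($W_0(\Omega)\subset\Omega$) I would estimate each coordinate separately on $(x,y)\in\Omega$. Since $\frac{\alpha x}{1+x}<\alpha$ for every $x\ge 0$ and $0\le 1-\mu<1$, one gets $y'=\frac{\alpha x}{1+x}+(1-\mu)y<\alpha+(1-\mu)\omega_2=\omega_2$, the equality holding because $\omega_2=\alpha/\mu$ is the fixed point of $t\mapsto\alpha+(1-\mu)t$. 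For the $x$-coordinate I would use that $t\mapsto\frac{t^2}{\gamma+t}$ is increasing, so $\frac{\beta y^2}{\gamma+y}\le\frac{\beta\omega_2^2}{\gamma+\omega_2}$, and a one-line computation shows this equals $d_0\omega_1$ (this identity is exactly how $\omega_1$ was chosen). The conditions in (\ref{parametr}), notably $\alpha+d_0\le 1$ and $d_0>0$, give $0\le 1-d_0-\frac{\alpha}{1+x}<1-d_0$, so the remaining term is at most $(1-d_0)\omega_1$, and $x'\le d_0\omega_1+(1-d_0)\omega_1=\omega_1$. Nonnegativity of $x',y'$ comes from Lemma \ref{0-0}.

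For \textbf{eventual absorption} from an arbitrary $(x^{(0)},y^{(0)})\in\mathbb{R}^2_{+}$ I would bound the orbit in four sequential stages. (1) From $y^{(n+1)}<\alpha+(1-\mu)y^{(n)}$ and the fact that $t\mapsto\alpha+(1-\mu)t$ drives any value above $\omega_2$ strictly downward, I get $y^{(n)}\le M:=\max(y^{(0)},\omega_2)$ for all $n$. (2) With $y$ bounded by $M$, the forcing term satisfies $\frac{\beta(y^{(n)})^2}{\gamma+y^{(n)}}\le K$ for a constant $K$, so $x^{(n+1)}\le K+(1-d_0)x^{(n)}$ gives $x^{(n)}\le C:=\max(x^{(0)},K/d_0)$ for all $n$. (3) Now that $x^{(n)}\le C$, I have $\frac{\alpha x^{(n)}}{1+x^{(n)}}\le\frac{\alpha C}{1+C}=:\alpha'<\alpha$, hence $y^{(n+1)}\le\alpha'+(1-\mu)y^{(n)}$ and $\limsup_n y^{(n)}\le\alpha'/\mu<\omega_2$, which forces $y^{(n)}\le\omega_2$ from some index on. (4) Once $y^{(n)}$ lies below $\omega_2$ with a fixed margin, $\frac{\beta(y^{(n)})^2}{\gamma+y^{(n)}}$ is bounded by a constant strictly below $d_0\omega_1$, so $\limsup_n x^{(n)}<\omega_1$ and $x^{(n)}\le\omega_1$ from some index on; together with the invariance already proved, the orbit enters $\Omega$ and stays.

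The \textbf{main obstacle} is subtler than the $\limsup$ estimates suggest: the definition requires the orbit to genuinely \emph{enter} the closed box, not merely to have $\limsup$'s equal to $\omega_1,\omega_2$, and a sequence decreasing to $\omega_2$ from above would satisfy the bound yet never lie in $\Omega$. The escape is the strict slack in $\frac{\alpha x}{1+x}<\alpha$: once $x$ is a priori bounded by a finite $C$, the $y$-iteration contracts toward $\alpha'/\mu$, which is strictly below $\omega_2$, producing a genuine margin, and that margin then pushes $x$ strictly inside $[0,\omega_1]$. Carrying out the bounds in the order $y\to x\to y\to x$ (a crude bound first, then a refinement that creates slack) is precisely what makes the argument close, so that ordering is the step I would take the most care over.
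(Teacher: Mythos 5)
Your proof is correct, but it takes a genuinely different route from the paper's. The paper establishes eventual absorption by two contradiction arguments: first, assuming $y^{(n)}>\omega_2$ for all $n$, it shows $y^{(n)}$ decreases to $\omega_2$, which forces $\frac{\alpha x^{(n)}}{1+x^{(n)}}\to\alpha$ and hence $x^{(n)}\to+\infty$, contradicting a geometric-series bound $x^{(n)}\lesssim y_0^{(0)}/d_0$; second, assuming $x^{(n)}>\omega_1$ for all $n$ (once $y$ has entered $[0,\omega_2]$), it shows $x^{(n)}$ decreases to $\omega_1$ and then, via a $\limsup/\liminf$ subsequence argument, that $y^{(n)}\to\frac{\omega_1\omega_2}{1+\omega_1}$, so the orbit would converge to a point which by Proposition \ref{fixed} is not fixed --- a contradiction; invariance of $\Omega$ is then checked by the one-step implications ``$y^{(n_0)}\le\omega_2\Rightarrow y^{(n_0+1)}<\omega_2$'' and ``$x^{(k_0)}\le\omega_1\Rightarrow x^{(k_0+1)}<\omega_1$''. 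You instead argue constructively by bootstrapping: crude a priori bounds $y^{(n)}\le\max(y^{(0)},\omega_2)$ and then $x^{(n)}\le C$, after which the strict slack $\frac{\alpha C}{1+C}=\alpha'<\alpha$ yields $\limsup_n y^{(n)}\le\alpha'/\mu<\omega_2$, and that margin in turn gives $\limsup_n x^{(n)}<\omega_1$; the identity $\frac{\beta\omega_2^2}{\gamma+\omega_2}=d_0\omega_1$ drives both your invariance check and the final estimate. What your approach buys: it avoids proof by contradiction, the subsequence argument for $\overline{y},\underline{y}$, and any appeal to the fixed-point classification (Proposition \ref{fixed}), and it explicitly resolves the pitfall --- which the paper handles only implicitly through its contradiction structure --- that $\limsup$ bounds alone would not force the orbit to actually enter the closed box. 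What the paper's argument buys is that it needs no auxiliary constants $M$, $C$, $\alpha'$, $K'$, extracting the contradiction directly from the dynamics. Both proofs rely on the same two structural facts, namely that $\frac{\alpha x}{1+x}<\alpha$ and that the maps $t\mapsto\alpha+(1-\mu)t$, $t\mapsto K+(1-d_0)t$ are contractions under (\ref{parametr}), so the difference is one of logical organization rather than of underlying estimates.
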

\begin{proof}

We have
\begin{equation}\label{x^ny^n}
\begin{array}{ll}
x^{(n)}=\frac{\beta\left(y^{(n-1)}\right)^2}{\gamma+y^{(n-1)}}+\left(1-d_{0}-\frac{\alpha}{1+x^{(n-1)}}\right)x^{(n-1)},\\[2mm]
y^{(n)}=\frac{\alpha x^{(n-1)}}{1+x^{(n-1)}}+\left(1-\mu\right)y^{(n-1)}.
\end{array}
\end{equation}
Assume that all values of $y^{(n)}$ are greater than $\omega_2$. Then
$$y^{(n+1)}-y^{(n)}=\frac{\alpha x^{(n)}}{1+x^{(n)}}-\mu y^{(n)}<\alpha-\mu y^{(n)}=\mu\left(\omega_2-y^{(n)}\right)<0.$$ So $y^{(n)}$ is a decreasing sequence. Since  $y^{(n)}$ is decreasing and bounded from below we have:
\begin{equation}\label{y[n]>a}
\lim_{n\to \infty}y^{(n)}\geq\omega_2.
\end{equation}
We estimate $y^{(n)}$ by the following:
 $$y^{(n)}=\frac{\alpha x^{(n-1)}}{1+x^{(n-1)}}+(1-\mu)y^{(n-1)}<\alpha+(1-\mu)y^{(n-1)}<\alpha+(1-\mu)\left(\alpha+(1-\mu)y^{(n-2)}\right)$$
 $$<\alpha+\alpha(1-\mu)+(1-\mu)^2\left(\alpha+(1-\mu)y^{(n-3)}\right)< ...$$ $$<\alpha+\alpha(1-\mu)+\alpha(1-\mu)^2+...+\alpha(1-\mu)^{n-1}+(1-\mu)^{n}y^{(0)}=\omega_2+(1-\mu)^{n}\left(y^{(0)}-\omega_2\right).$$
Thus $y^{(n)}<\omega_2+(1-\mu)^{n}\left(y^{(0)}-\omega_2\right)$. Consequently
\begin{equation}\label{y[n]<a}
\lim_{n\to \infty}y^{(n)}\leq\omega_2.
\end{equation}
 By (\ref{y[n]>a}) and (\ref{y[n]<a}) we have \begin{equation}\label{y[n]=a}
 \lim\limits_{n\to \infty}y^{(n)}=\omega_2.
\end{equation}
From (\ref{y[n]=a}) and $y^{(n)}=\frac{\alpha x^{(n-1)}}{1+x^{(n-1)}}-\mu y^{(n-1)}+y^{(n-1)}$ it follows
\begin{equation}\label{x[n]=infty}\lim\limits_{n\to \infty}x^{(n)}=+\infty.
\end{equation}

Now we estimate the sequence $x^{(n)}$. The inequality $y^{(n)}\leq y^{(0)}$ holds for all $n$, as the sequence $y^{(n)}$ is decreasing. We denote $y_0^{(0)}=\frac{\beta \left(y^{(0)}\right)^2}{\gamma+y^{(0)}}.$ Then
\[x^{(n)}=\frac{\beta \left(y^{(n-1)}\right)^2}{\gamma+y^{(n-1)}}+\left(1-d_{0}-\frac{\alpha}{1+x^{(n-1)}}\right)x^{(n-1)}<y_0^{(0)}+(1-d_{0})x^{(n-1)}\]
\[<y_0^{(0)}+(1-d_{0})\left(y_0^{(0)}+(1-d_{0})x^{(n-2)}\right)<...\]
\[<y_0^{(0)}+y_0^{(0)}(1-d_{0})+...+y_0^{(0)}(1-d_{0})^{n-1}+(1-d_{0})^{n}x^{(0)}\]
\[=\frac{y_0^{(0)}}{d_{0}}+(1-d_{0})^{n}\left(x^{(0)}-\frac{y_0^{(0)}}{d_{0}}\right).\]
From condition (\ref{parametr}) it follows $d_{0}\in(0, 1).$ Consequently
\begin{equation}\label{x[n]<y0}
\lim_{n\to \infty}x^{(n)}\leq\frac{y_0^{(0)}}{d_{0}}.
\end{equation}
Our initial assumption is invalid based on  (\ref{x[n]=infty}) and (\ref{x[n]<y0}). Therefore, there exists $n_0$ such that $y^{(n_0)}$ is not greater than $\omega_2$.
If $y^{(n_0)}\leq\omega_2$ then $y^{(n_0+1)}<\omega_2$. Indeed,
  \[y^{(n_0+1)}=(1-\mu)y^{(n_0)}+\frac{\alpha x^{(n_0)}}{1+x^{(n_0)}}\leq(1-\mu)\omega_2+\frac{\alpha x^{(n_0)}}{1+x^{(n_0)}}\]
  \[=\omega_2-\alpha\left(1-\frac{x^{(n_0)}}{1+x^{(n_0)}}\right)<\omega_2.\]

Assume that all values of $x^{(n)}$ are greater than $\omega_1$. Then
$$x^{(n+1)}-x^{(n)}=\frac{\beta \left(y^{(n)}\right)^2}{\gamma+y^{(n)}}-\frac{\alpha x^{(n)}}{1+x^{(n)}}-d_{0}x^{(n)}<\frac{\beta \omega_2^2}{\gamma+\omega_2}-d_{0} x^{(n)}=d_{0}\left(\omega_1-x^{(n)}\right)\leq0.$$ So $x^{(n)}$ is a decreasing sequence. Since  $x^{(n)}$ is decreasing and bounded from below we have:
\begin{equation}\label{x[n]>a}
\lim_{n\to \infty}x^{(n)}\geq\omega_1.
\end{equation}
We estimate $x^{(n)}$ by the following:
\[x^{(n)}=\frac{\beta \left(y^{(n-1)}\right)^2}{\gamma+y^{(n-1)}}+\left(1-d_0-\frac{\alpha}{1+x^{(n-1)}}\right)x^{(n-1)}<\frac{\beta \omega_2^2}{\gamma+\omega_2}+(1-d_{0})x^{(n-1)}\]
\[<\frac{\beta \omega_2^2}{\gamma+\omega_2}+(1-d_{0})\left(\frac{\beta \omega_2^2}{\gamma+\omega_2}+(1-d_{0})x^{(n-2)}\right)<...\]
\[<\frac{\beta \omega_2^2}{\gamma+\omega_2}+\frac{\beta \omega_2^2}{\gamma+\omega_2}(1-d_{0})+...+\frac{\beta \omega_2^2}{\gamma+\omega_2}(1-d_{0})^{n-1}+(1-d_{0})^{n}x^{(0)}\]
\[=\omega_1+(1-d_{0})^{n}\left(x^{(0)}-\omega_1\right).\]
From $d_{0}\in(0, 1)$, consequently, the inequality holds.
\begin{equation}\label{x[n]<a}
\lim_{n\to \infty}x^{(n)}\leq\omega_1.
\end{equation}
 By (\ref{x[n]>a}) and (\ref{x[n]<a}) we have
 \begin{equation}\label{x[n]=a}
 \lim\limits_{n\to \infty}x^{(n)}=\omega_1.
\end{equation}
Since $x^{(n)}$ converges to $\omega_1$, it follows from the second equation of (\ref{x^ny^n}) that $$\lim\limits_{n\to \infty}\left(y^{(n)}-(1-\mu)y^{(n-1)}\right)=\frac{\alpha\omega_1}{1+\omega_1}.$$
Since $y^{(n)}$ is bounded, it possesses upper and lower limits, denoted by $\overline{y}$ and $\underline{y}$, respectively. To arrive at a contradiction, suppose $y^{(n)}$ has no limit (i.e.,$\overline{y}\neq\underline{y}$). This would necessitate the existence of two subsequences: $y^{(n_i)}$ converging to $\overline{y}$ and $y^{(n_j)}$ converging to $\underline{y}$.
\begin{flushleft}
$\lim\limits_{i\to \infty}\left(y^{(n_i)}-(1-\mu)y^{(n_i-1)}\right)=\frac{\alpha\omega_1}{1+\omega_1},\ \Rightarrow \ \lim\limits_{i\to \infty}y^{(n_i-1)}=\frac{1}{1-\mu}\left(\overline{y}-\frac{\alpha\omega_1}{1+\omega_1}\right)\leq\overline{y}, \ \Rightarrow \ \overline{y}\leq\frac{\alpha\omega_1}{\mu(1+\omega_1)};$\\[2mm]
$\lim\limits_{i\to \infty}\left(y^{(n_i+1)}-(1-\mu)y^{(n_i)}\right)=\frac{\alpha\omega_1}{1+\omega_1},\ \Rightarrow \ \lim\limits_{i\to \infty}y^{(n_i+1)}=(1-\mu)\overline{y}+\frac{\alpha\omega_1}{1+\omega_1}\leq\overline{y},\ \Rightarrow \ \overline{y}\geq\frac{\alpha\omega_1}{\mu(1+\omega_1)};$\\[2mm]
$\lim\limits_{j\to \infty}\left(y^{(n_j)}-(1-\mu)y^{(n_j-1)}\right)=\frac{\alpha\omega_1}{1+\omega_1},\ \Rightarrow \ \lim\limits_{j\to \infty}y^{(n_j-1)}=\frac{1}{1-\mu}\left(\underline{y}-\frac{\alpha\omega_1}{1+\omega_1}\right)\geq\underline{y}, \Rightarrow  \underline{y}\geq\frac{\alpha\omega_1}{\mu(1+\omega_1)};$\\[2mm]
$\lim\limits_{j\to \infty}\left(y^{(n_j+1)}-(1-\mu)y^{(n_j)}\right)=\frac{\alpha\omega_1}{1+\omega_1},\ \Rightarrow \ \lim\limits_{j\to \infty}y^{(n_j+1)}=(1-\mu)\underline{y}+\frac{\alpha\omega_1}{1+\omega_1}\geq\underline{y},\ \Rightarrow \ \underline{y}\leq\frac{\alpha\omega_1}{\mu(1+\omega1)}.$
\end{flushleft}
Therefore, we conclude that $\overline{y}=\underline{y}=\frac{\alpha\omega_1}{\mu(1+\omega_1)}$ i.e.,
\[\lim\limits_{n\to \infty}y^{(n)}=\frac{\omega_1\omega_2}{1+\omega_1}.\]

Proposition \ref{fixed} shows that $\left(\omega_1, \frac{\omega_1\omega_2}{1+\omega_1}\right)$ is not a fixed point of system (\ref{systemacase}), contradicting our hypothesis.
Therefore, there exists $m_0$ such that $x^{(k_0)}$ is not greater than $\omega_1$.
If $x^{(k_0)}\leq\omega_1$ then $x^{(k_0+1)}<\omega_1$. Indeed,
\[x^{(k_0+1)}=\frac{\beta \left(y^{(k_0)}\right)^2}{\gamma+y^{(k_0)}}+\left(1-d_0-\frac{\alpha}{1+x^{(k_0)}}\right)x^{(k_0)}<\frac{\beta\omega_2^2}{\gamma+\omega_2}+(1-d_{0})x^{(k_0)}\]
\[\leq\frac{\beta\omega_2^2}{\gamma+\omega_2}+(1-d_{0})\omega_1=\omega_1.\]
So for any initial point $\left(x^{(0)}, y^{(0)}\right)\in \mathbb{R}^{2}_{+}$, there exists $k_0\in \mathbb{N}$ such that $W_0^k\left(x^{(0)}, y^{(0)}\right)\in\Omega$ for all $k\geq k_0.$ Furthermore, $W_0$ maps the set $\Omega$ into itself.
\end{proof}

Lemma \ref{ab.set} establishes that $\Omega$ is an absorbing set. Consequently, we can restrict our analysis to the dynamics of the operator $W_0$ within the set $\Omega$.

\begin{pro}\label{f1f2increasing}
Let $[a,b]$ and $[c,d]$ be intervals of real numbers, and let
\[f_1:[a,b]\times[c,d]\rightarrow [a,b], \ \ and \ \ f_2:[a,b]\times[c,d]\rightarrow [c,d]\]
be continuous functions. Consider the system of difference equation
\begin{equation}\label{f1f2}
\begin{array}{ll}
    x_{n+1}=f_1(x_n, y_n),\\[2mm]
    y_{n+1}=f_2(x_n, y_n),
\end{array} \ \ n=0,1,2,... %
\end{equation}
with initial condition $\left(x^{(0)},y^{(0)}\right)\in [a,b]\times[c,d].$ Suppose the following conditions (i) and (ii) are satisfied:
\begin{itemize}
  \item[ \textbf{(i)}] $f_1(x,y)$ and $f_2(x,y)$ are non-decreasing in both $x$ and $y.$
  \item[ \textbf{(ii)}] If $a_1,b_1\in[a,b]$, $c_1,d_1\in[c,d]$ is a solution of the system of equations
  \begin{equation}
\left\{%
\begin{array}{ll}
    a_1=f_1(a_1,c_1), \\[2mm]
    c_1=f_2(a_1,c_1),
\end{array}%
\right.
\left\{%
\begin{array}{ll}
    b_1=f_1(b_1,d_1)\\[2mm]
    d_1=f_2(b_1,d_1)
\end{array}%
\right.
\end{equation}
then $a_1=b_1$ and $c_1=d_1$.
\end{itemize}
Then there exists exactly one fixed point $(x^*,y^*)$ of system (\ref{f1f2}), and trajectory of any initial point $\left(x^{(0)}, y^{(0)}\right)$ in $[a,b]\times[c,d]$ converges to $(x^*,y^*)$.
\end{pro}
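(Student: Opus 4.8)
The plan is to use the classical monotone-iteration (sub/super-solution) scheme for cooperative maps: I would squeeze an arbitrary trajectory between two monotone sequences generated from the two corners of the rectangle. Throughout, write $W=(f_1,f_2)$ and work with the NE ordering $\preceq_{NE}$; by hypothesis (i), $W$ is order-preserving for $\preceq_{NE}$, since each $f_i$ is nondecreasing in both arguments.

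First I would build the two boundary iterations. Set $L_0=(a,c)$ and $U_0=(b,d)$, and define $L_{n+1}=W(L_n)$ and $U_{n+1}=W(U_n)$. Because $f_1$ maps into $[a,b]$ and $f_2$ into $[c,d]$, the very first step already orders correctly: $W(L_0)=(f_1(a,c),f_2(a,c))\succeq_{NE}(a,c)=L_0$ and $W(U_0)\preceq_{NE}U_0$. Applying the order-preservation of $W$ repeatedly (a one-line induction) then shows that $\{L_n\}$ is nondecreasing and $\{U_n\}$ is nonincreasing in $\preceq_{NE}$, while the relation $L_0\preceq_{NE}U_0$ propagates to $L_n\preceq_{NE}U_n$ for every $n$.

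Next I would pass to the limit. Each coordinate of $L_n$ is a monotone real sequence confined to $[a,b]$ or $[c,d]$, hence convergent; the same holds for $U_n$. Writing $L_n\to L^*$ and $U_n\to U^*$ and invoking continuity of $f_1,f_2$, both limits are fixed points, $L^*=W(L^*)$ and $U^*=W(U^*)$. Hypothesis (ii), applied to the pair of fixed points $L^*,U^*$, forces them to coincide, $L^*=U^*=:(x^*,y^*)$; this simultaneously yields existence and uniqueness of the fixed point. Finally, for global convergence I would sandwich: any initial point satisfies $L_0\preceq_{NE}(x^{(0)},y^{(0)})\preceq_{NE}U_0$, so applying $W^n$ and order-preservation gives $L_n\preceq_{NE}W^n(x^{(0)},y^{(0)})\preceq_{NE}U_n$ for all $n$, and since both bounding sequences tend to $(x^*,y^*)$, the coordinatewise squeeze theorem delivers $W^n(x^{(0)},y^{(0)})\to(x^*,y^*)$.

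The only place a genuine hypothesis beyond monotonicity is needed is the gluing step: a priori the two corner iterations could converge to distinct fixed points $L^*\neq U^*$, and it is precisely condition (ii) that rules this out. I expect this identification — together with the verification that the first iterate already orders against each corner, which is exactly where the codomain constraints $f_1\in[a,b]$ and $f_2\in[c,d]$ are used — to be the crux of the argument; the remaining ingredients (monotone convergence in each coordinate and the final squeeze) are routine.
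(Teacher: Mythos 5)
Your proof is correct and follows essentially the same route as the paper's: both iterate the map from the two corners $(a,c)$ and $(b,d)$, use monotonicity and the codomain constraints to get monotone bounding sequences, pass to the limit via continuity, invoke hypothesis (ii) to identify the two limiting fixed points, and squeeze an arbitrary trajectory between them. The only (cosmetic) difference is that the paper first cites the Brouwer fixed-point theorem for existence, whereas you obtain existence directly from the common limit of the corner iterations, which slightly streamlines the argument.
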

\begin{proof}
By the Brouwer fixed-point theorem, the operator
\[W:[a,b]\times[c,d]\rightarrow [a,b]\times[c,d]\]
defined by
\[W(x,y)=\left(f_1(x,y), f_2(x,y)\right)\]
has a fixed point $(x^*,y^*)$ (cf. with proof of Theorem 1.16 of \cite{GL}). Since $(x^*,y^*)$ also satisfies system (\ref{f1f2}), we can conclude that system (\ref{f1f2}) has at least one fixed point.

Consider the sequence  $\left(x^{(n)},y^{(n)}\right)$ generated by the operator $W$  for any initial point $\left(x^{(0)}, y^{(0)}\right)$ within the rectangle  $[a,b]\times[c,d]$. It suffices to show that
\[\lim\limits_{n\to \infty}\left(x^{(n)},y^{(n)}\right)=(x^*,y^*).\]
We denote $a_1^{(0)}=a, b_1^{(0)}=b, c_1^{(0)}=c, d_1^{(0)}=d$, and for each $i\geq 0$, let
\begin{equation}
\begin{array}{ll}
    a_1^{(i+1)}=f_1\left(a_1^{(i)},c_1^{(i)}\right), \ \ b_1^{(i+1)}=f_1\left(b_1^{(i)},d_1^{(i)}\right),\\[3mm]
    c_1^{(i+1)}=f_2\left(a_1^{(i)},c_1^{(i)}\right), \ \ d_1^{(i+1)}=f_2\left(b_1^{(i)},d_1^{(i)}\right).
\end{array}%
\end{equation}
Then
\[\begin{array}{ll}
    a_1^{(0)}=a\leq f_1\left(a_1^{(0)},c_1^{(0)}\right)\leq f_1\left(b_1^{(0)},d_1^{(0)}\right)\leq b=b_1^{(0)},\\[3mm]
    c_1^{(0)}=c\leq f_2\left(a_1^{(0)},c_1^{(0)}\right)\leq f_2\left(b_1^{(0)},d_1^{(0)}\right)\leq d=d_1^{(0)},
\end{array}\]
and from this, we can see that
\begin{center}
  $a_1^{(0)}\leq a_1^{(1)}\leq b_1^{(1)}\leq b_1^{(0)}$ \ and \  $c_1^{(0)}\leq c_1^{(1)}\leq d_1^{(1)}\leq d_1^{(0)}.$
\end{center}
Following the same approach, we have
\[\begin{array}{ll}
    a_1^{(1)}=f_1\left(a_1^{(0)},c_1^{(0)}\right)\leq f_1\left(a_1^{(1)},c_1^{(1)}\right)\leq f_1\left(b_1^{(1)},d_1^{(1)}\right)\leq f_1\left(b_1^{(0)},d_1^{(0)}\right)=b_1^{(1)},\\[3mm]
    c_1^{(1)}=f_2\left(a_1^{(0)},c_1^{(0)}\right)\leq f_2\left(a_1^{(1)},c_1^{(1)}\right)\leq f_2\left(b_1^{(1)},d_1^{(1)}\right)\leq f_2\left(b_1^{(0)},d_1^{(0)}\right)=d_1^{(1)},
\end{array}\]
and from this, we can see that
\begin{center}
  $a_1^{(0)}\leq a_1^{(1)}\leq a_1^{(2)}\leq b_1^{(2)}\leq b_1^{(1)}\leq b_1^{(0)}$ \ and \  $c_1^{(0)}\leq c_1^{(1)}\leq c_1^{(2)}\leq d_1^{(2)}\leq d_1^{(1)}\leq d_1^{(0)}.$
\end{center}
Note that
\begin{center}
  $a_1^{(0)}=a\leq x^{(n)}\leq b=b_1^{(0)}$ \ and \  $c_1^{(0)}=c\leq y^{(n)}\leq d=d_1^{(0)}$ for all $n\geq0.$
\end{center}
For all $n\geq0$, we have
\[\begin{array}{ll}
    a_1^{(1)}=f_1\left(a_1^{(0)},c_1^{(0)}\right)\leq f_1\left(x^{(n)},y^{(n)}\right)\leq f_1\left(b_1^{(0)},d_1^{(0)}\right)=b_1^{(1)},\\[3mm]
    c_1^{(1)}=f_2\left(a_1^{(0)},c_1^{(0)}\right)\leq f_2\left(x^{(n)},y^{(n)}\right)\leq f_2\left(b_1^{(0)},d_1^{(0)}\right)=d_1^{(1)}.
\end{array}\]
Hence, it follows that
\begin{center}
  $a_1^{(1)}\leq x^{(n)}\leq b_1^{(1)}$ \ and \  $c_1^{(1)}\leq y^{(n)}\leq d_1^{(1)}$ for all $n\geq1.$
\end{center}
For all $n\geq1$, we have
\[\begin{array}{ll}
    a_1^{(2)}=f_1\left(a_1^{(1)},c_1^{(1)}\right)\leq f_1\left(x^{(n)},y^{(n)}\right)\leq f_1\left(b_1^{(1)},d_1^{(1)}\right)=b_1^{(2)},\\[3mm]
    c_1^{(2)}=f_2\left(a_1^{(1)},c_1^{(1)}\right)\leq f_2\left(x^{(n)},y^{(n)}\right)\leq f_2\left(b_1^{(1)},d_1^{(1)}\right)=d_1^{(2)}.
\end{array}\]
This implies that
\begin{center}
  $a_1^{(2)}\leq x^{(n)}\leq b_1^{(2)}$ \ and \  $c_1^{(2)}\leq y^{(n)}\leq d_1^{(2)}$ for all $n\geq2.$
\end{center}
It follows by induction that for $i\geq0$, the following statements are true:
 \begin{flushleft}
 \textbf{(i)} \ \ \ $a=a_1^{(0)}\leq a_1^{(1)}\leq...\leq a_1^{(i-1)}\leq a_1^{(i)}\leq b_1^{(i)}\leq b_1^{(i-1)}\leq...\leq b_1^{(1)}\leq b_1^{(0)}=b.$\\[2mm]
\textbf{(ii)} \ \ $c=c_1^{(0)}\leq c_1^{(1)}\leq...\leq c_1^{(i-1)}\leq c_1^{(i)}\leq d_1^{(i)}\leq d_1^{(i-1)}\leq...\leq d_1^{(1)}\leq d_1^{(0)}=d.$\\[2mm]
\textbf{(iii)} \ $a_1^{(i)}\leq x^{(n)}\leq b_1^{(i)}$ \  for all $n\geq i.$  \\[2mm]
\textbf{(iv)} \ \ $c_1^{(i)}\leq y^{(n)}\leq d_1^{(i)}$ \ for all $n\geq i.$
\end{flushleft}
Assertions (i) and (ii) guarantee that the sequences $a_1^{(i)}$, $b_1^{(i)}$, $c_1^{(i)}$ and $d_1^{(i)}$ converge. Let their limits be denoted by $a_1$, $b_1$, $c_1$, and $d_1$, respectively.
Then
\begin{center}
  $a\leq a_1\leq b_1\leq b$ \ and \  $c\leq c_1\leq d_1\leq d.$
\end{center}
By the continuity of $f_1$ and $f_2$, we have

\[\left\{%
\begin{array}{ll}
    a_1=f_1(a_1,c_1), \\[2mm]
    c_1=f_2(a_1,c_1),
\end{array}%
\right.
\left\{%
\begin{array}{ll}
    b_1=f_1(b_1,d_1)\\[2mm]
    d_1=f_2(b_1,d_1)
\end{array}%
\right.\]
and hence $a_1=b_1$ and $c_1=d_1$
from which the proof follows.
\end{proof}

The following theorem describes the trajectory of any initial point $(x^{(0)}, y^{(0)})$ in $\Omega$.
\begin{thm}\label{Omega-0} Assume that (\ref{parametr}) holds and let $\beta<\nu$. Then the trajectory converges to the unique fixed point zero for any initial point $\left(x^{(0)}, y^{(0)}\right)\in\Omega$, i.e.,
\[\lim\limits_{n\to \infty}W_0^n\left(x^{(0)},y^{(0)}\right)=\left(0,0\right).\]
\end{thm}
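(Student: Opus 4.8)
The plan is to apply Proposition \ref{f1f2increasing} directly on the box $\Omega=[0,\omega_1]\times[0,\omega_2]$, taking $f_1(x,y)=\frac{\beta y^2}{\gamma+y}+\left(1-d_0-\frac{\alpha}{1+x}\right)x$ and $f_2(x,y)=\frac{\alpha x}{1+x}+(1-\mu)y$ as the two coordinate maps of $W_0$ from (\ref{systemacase}). First I would record the background hypotheses of that proposition: $f_1$ and $f_2$ are continuous on $\Omega$, and the invariance part of Lemma \ref{ab.set}, namely $W_0(\Omega)\subset\Omega$, guarantees $f_1:\Omega\to[0,\omega_1]$ and $f_2:\Omega\to[0,\omega_2]$, so the maps have the required codomains.

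The crux is verifying hypothesis (i), the coordinatewise monotonicity. For $f_2$ one computes $\partial_x f_2=\frac{\alpha}{(1+x)^2}>0$ and $\partial_y f_2=1-\mu\ge 0$, the latter because $\mu\le 1$ by (\ref{parametr}). For $f_1$ one has $\partial_y f_1=\frac{\beta y(2\gamma+y)}{(\gamma+y)^2}\ge 0$ for $y\ge 0$, while the potentially troublesome term is $\partial_x f_1=1-d_0-\frac{\alpha}{(1+x)^2}$. Here the parameter constraint $\alpha+d_0\le 1$ from (\ref{parametr}) is exactly what is needed: since $\frac{\alpha}{(1+x)^2}\le\alpha$ for $x\ge 0$, we get $\partial_x f_1\ge 1-d_0-\alpha\ge 0$. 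Hence both maps are non-decreasing in each variable on $\Omega$, and this monotonicity check is the one delicate point in the argument.

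Next I would dispatch hypothesis (ii). Under the standing assumption $\beta<\nu$, Proposition \ref{fixed} asserts that $(0,0)$ is the unique fixed point of $W_0$ in $\mathbb{R}^2_+$, and therefore in $\Omega$. Consequently, if $(a_1,c_1)$ and $(b_1,d_1)$ in $\Omega$ both solve the fixed-point system, each must coincide with $(0,0)$, so $a_1=b_1$ and $c_1=d_1$, which is precisely (ii).

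With (i) and (ii) established, Proposition \ref{f1f2increasing} yields a unique fixed point $(x^*,y^*)\in\Omega$ to which the trajectory of every initial point in $\Omega$ converges; since that fixed point is forced to be $(0,0)$, we conclude $\lim_{n\to\infty}W_0^n\left(x^{(0)},y^{(0)}\right)=(0,0)$ for all $\left(x^{(0)},y^{(0)}\right)\in\Omega$. I expect no serious obstacle beyond the monotonicity verification; the remainder is a bookkeeping application of the earlier results, and the only point worth emphasizing is the role of $\alpha+d_0\le 1$ in ensuring that the cooperative (order-preserving) structure genuinely holds throughout $\Omega$.
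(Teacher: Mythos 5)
Your proposal is correct and follows essentially the same route as the paper: the paper's proof of Theorem \ref{Omega-0} likewise defines $f_1$ and $f_2$ as the coordinate maps of $W_0$ and invokes Proposition \ref{f1f2increasing} on $\Omega$, though it merely asserts that the hypotheses hold. Your verification of monotonicity via $\alpha+d_0\le 1$ and of hypothesis (ii) via the uniqueness of the fixed point from Proposition \ref{fixed} under $\beta<\nu$ supplies exactly the details the paper leaves implicit.
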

\begin{proof} Let
\[f_1(x,y)=\frac{\beta y^2}{\gamma+y}+x\left(1-d_0-\frac{\alpha}{1+x}\right),\]
\[f_2(x,y)=\frac{\alpha x}{1+x}+(1-\mu)y.\]
Since functions $f_1(x, y)$ and $f_2(x, y)$ satisfy all the conditions of Proposition \ref{f1f2increasing}, the trajectory of any initial point within the set $\Omega$ converges to zero.
\end{proof}

\subsection{Dynamics on invariant sets.}\

A set $A$ is called invariant with respect to $W_{0}$ if $W_{0}(A)\subset A$. We denote
\[\Omega_1(x^*,y^*)=\left\{(x,y)\in\Omega, \ \ 0\leq x\leq x^*, \ \ 0\leq y\leq y^*\}\setminus\{(x^*,y^*)\right\},\]
\[\Omega_2(x^*,y^*)=\left\{(x,y)\in\Omega, \ \ x^*\leq x\leq\omega_1, y^*\leq y\leq\omega_2\right\}\setminus\left\{(x^*,y^*)\right\},\]
where $(x^*,y^*)$ is the positive fixed point defined by $(\ref{fix123})$.

\begin{lemma} The sets $\Omega_1$ and $\Omega_2$ are invariant with respect to $W_{0}.$
\end{lemma}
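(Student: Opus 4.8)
The plan is to exploit the monotonicity of $W_0$ that was already used in the proof of Theorem~\ref{Omega-0}: there $f_1$ and $f_2$ are shown to be non-decreasing in both arguments, so $W_0$ is cooperative, i.e. monotone with respect to $\preceq_{NE}$. Combining this with three structural facts — that $(0,0)$ is a fixed point, that $(x^*,y^*)$ is a fixed point, and that $\Omega$ is invariant by Lemma~\ref{ab.set} — monotonicity will confine the image of each box to the appropriate $\preceq_{NE}$ order interval, so that the only genuine work is to verify that the corner $(x^*,y^*)$ is never the image of a distinct point.

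First, for $\Omega_1$ I would take $(x,y)$ with $(0,0)\preceq_{NE}(x,y)\preceq_{NE}(x^*,y^*)$ and $(x,y)\neq(x^*,y^*)$. Applying $W_0$ and using monotonicity on both ends of the order interval gives
\[
(0,0)=W_0(0,0)\preceq_{NE}W_0(x,y)\preceq_{NE}W_0(x^*,y^*)=(x^*,y^*),
\]
so $W_0(x,y)$ lies in the closed box $\mathopen{[\![}(0,0),(x^*,y^*)\mathclose{]\!]}$. It then remains to show $W_0(x,y)\neq(x^*,y^*)$. For this I would examine the second coordinate $f_2(x,y)=\frac{\alpha x}{1+x}+(1-\mu)y$: since $\frac{\alpha x}{1+x}$ is strictly increasing in $x$, in the case $\mu<1$ the equality $f_2(x,y)=f_2(x^*,y^*)=y^*$ would force $x=x^*$ and $y=y^*$, a contradiction; hence $f_2(x,y)<y^*$ and $W_0(x,y)\in\Omega_1$.

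For $\Omega_2$ the argument is symmetric, now using the fixed point $(x^*,y^*)$ as the lower corner and the invariance of $\Omega$ (rather than a second fixed point) for the upper bound. For $(x^*,y^*)\preceq_{NE}(x,y)$ with $(x,y)\in\Omega$, monotonicity gives $(x^*,y^*)=W_0(x^*,y^*)\preceq_{NE}W_0(x,y)$, while $W_0(\Omega)\subset\Omega$ gives $W_0(x,y)\preceq_{NE}(\omega_1,\omega_2)$, so $W_0(x,y)$ sits in $\mathopen{[\![}(x^*,y^*),(\omega_1,\omega_2)\mathclose{]\!]}$; the same strict-monotonicity computation on $f_2$ shows $f_2(x,y)>y^*$ whenever $(x,y)\neq(x^*,y^*)$, so the image again avoids the excluded corner.

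The main obstacle is the degenerate boundary case $\mu=1$, where $\partial f_2/\partial y=1-\mu=0$ and $f_2$ no longer depends on $y$, so the clean strictness argument above collapses. Here I would instead note that $f_2(x,y)=y^*$ forces $x=x^*$ (strict monotonicity of $\frac{\alpha x}{1+x}$), and then turn to the first coordinate: since $\frac{\partial f_1}{\partial y}=\frac{\beta y(2\gamma+y)}{(\gamma+y)^2}>0$ for $y>0$ and $d_0>0$ forces $f_1(x^*,0)<x^*$, one checks that $f_1(x^*,y)\neq x^*$ unless $y=y^*$ (with $f_1(x^*,y)<x^*$ for $y<y^*$ handling $\Omega_1$ and $f_1(x^*,y)>x^*$ for $y>y^*$ handling $\Omega_2$). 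This two-coordinate case split, driven precisely by the points where the partials of $f_1$ and $f_2$ vanish, is the one delicate part; everything else follows directly from monotonicity and the fixed-point relations.
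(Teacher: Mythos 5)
Your proposal is correct, and at its core it runs on the same mechanism as the paper's proof --- cooperativity of $W_0$ combined with the fixed-point relations --- but the two are organized differently, and yours is more complete. The paper proves invariance by direct computation: for $(x,y)\in\Omega_1$ it bounds $x'-x^*$ and $y^*-y'$ term by term, which is exactly the NE-monotonicity inequality written out by hand (there is even a small slip there: its first chain ends in ``$=0$'' although, with $\frac{\alpha x^*}{1+x}$ in place of $\frac{\alpha x^*}{1+x^*}$, the correct conclusion is ``$\leq 0$''). Your version abstracts this into $W_0(0,0)=(0,0)\preceq_{NE}W_0(x,y)\preceq_{NE}W_0(x^*,y^*)=(x^*,y^*)$, and for $\Omega_2$ replaces the role of a second fixed point by the invariance of $\Omega$ from Lemma \ref{ab.set}; both steps are legitimate and not circular, since the monotonicity of $f_1,f_2$ is already available from Proposition \ref{f1f2increasing} and the proof of Theorem \ref{Omega-0}, even though the paper only records it as Lemma \ref{coop} after the present lemma. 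The genuine difference is your treatment of the deleted corner: since $\Omega_1$ and $\Omega_2$ are defined with $(x^*,y^*)$ removed, invariance also requires checking that the image of a point distinct from $(x^*,y^*)$ cannot equal $(x^*,y^*)$, and the paper's proof is silent on this point, while your strictness analysis closes it. That analysis is correct as stated: for $\mu<1$, strict monotonicity of $f_2$ in each variable forces $x=x^*$ and $y=y^*$; for $\mu=1$, where $f_2$ no longer depends on $y$, the equality $f_2(x,y)=y^*$ forces $x=x^*$, after which strict monotonicity of $y\mapsto\frac{\beta y^2}{\gamma+y}$ on $[0,\infty)$ gives $f_1(x^*,y)<x^*$ for $y<y^*$ and $f_1(x^*,y)>x^*$ for $y>y^*$, so the image avoids the corner in both $\Omega_1$ and $\Omega_2$. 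In short, your write-up is a refinement of the paper's argument rather than a departure from it: what it buys is a rigorous handling of the excluded point (and of the boundary case $\mu=1$), at the cost of invoking the order-theoretic framework explicitly rather than repeating the elementary inequalities.
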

\begin{proof}

\textbf{1)} Let $0\leq x\leq x^*, 0\leq y\leq y^*.$ Then
\[x'-x^*=\frac{\beta y^2}{\gamma+y}+x(1-d_0-\frac{\alpha}{1+x})-x^*\leq\frac{\beta y^{*2}}{\gamma+y^*}+x^*(1-d_0-\frac{\alpha}{1+x})-x^*\]
\[=\frac{\beta y^{*2}}{\gamma+y^*}-d_0x^*-\frac{\alpha x^*}{1+x}=0,\]
\[y^*-y'=y^*-(1-\mu)y-\frac{\alpha x}{1+x}\geq y^*-(1-\mu)y^*-\frac{\alpha x^*}{1+x^*}=\mu y^*-\frac{\alpha x^*}{1+x^*}=0.\]
So, $(x',y')\in W_0(\Omega_1)\subset\Omega_1$

\textbf{2)} Let $x^*\leq x\leq \omega_1, y^*\leq y\leq\omega_2.$ Then
\[x^*-x'=x^*-\frac{\beta y^2}{\gamma+y}-x(1-d_0-\frac{\alpha}{1+x})\leq x^*-\frac{\beta {y^*}^2}{\gamma+y^*}-x^*(1-d_0-\frac{\alpha}{1+x})\]
\[=-\frac{\beta y^{*2}}{\gamma+y^*}+d_0x^*+\frac{\alpha x^*}{1+x}=0,\]
\[y'-y^*=\frac{\alpha x}{1+x}+(1-\mu)y-y^*\geq\frac{\alpha x^*}{1+x^*}+(1-\mu)y^*-y^*=0.\]
Therefore, $(x',y')\in W_0(\Omega_2)\subset\Omega_2.$
\end{proof}

\begin{lemma}\label{coop} The operator $W_0$ defined in (\ref{systemacase}) is monotone with respect to the $NE$ ordering, i.e., is a cooperative map.
\end{lemma}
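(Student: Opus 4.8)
The plan is to reduce monotonicity with respect to the $NE$ ordering to the coordinate-wise criterion recorded in the preliminaries: $W_0$ is cooperative precisely when both component functions
$f_1(x,y)=\frac{\beta y^2}{\gamma+y}+\left(1-d_0-\frac{\alpha}{1+x}\right)x$ and $f_2(x,y)=\frac{\alpha x}{1+x}+(1-\mu)y$
are non-decreasing in each of the variables $x$ and $y$ on $\mathbb{R}^2_{+}$. Since both functions are smooth on the region of interest (away from the excluded lines $x=-1$ and $y=-\gamma$), I would verify this by checking that all four first-order partial derivatives are non-negative on $\mathbb{R}^2_{+}$ under the standing hypotheses (\ref{parametr}); these are exactly the entries of the Jacobian $J_{W_0}$ already computed above.

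First I would dispatch the three easy signs. Direct differentiation gives $\partial_y f_1=\frac{\beta y(2\gamma+y)}{(\gamma+y)^2}$, which is non-negative for $y\geq0$ since $\beta,\gamma>0$; $\partial_x f_2=\frac{\alpha}{(1+x)^2}>0$ since $\alpha>0$; and $\partial_y f_2=1-\mu\geq0$ by the condition $0<\mu\leq1$ in (\ref{parametr}). Each of these uses nothing beyond the positivity of the parameters.

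The one derivative carrying real content is $\partial_x f_1$. Differentiating the term $\left(1-d_0\right)x-\frac{\alpha x}{1+x}$ yields $\partial_x f_1=1-d_0-\frac{\alpha}{(1+x)^2}$. Here the crucial step is the estimate: for $x\geq0$ we have $(1+x)^2\geq1$, hence $\frac{\alpha}{(1+x)^2}\leq\alpha$, and the hypothesis $\alpha+d_0\leq1$ from (\ref{parametr}) gives $\alpha\leq1-d_0$. Combining these, $\partial_x f_1\geq1-d_0-\alpha\geq0$ on all of $\mathbb{R}^2_{+}$. This is the main (and essentially the only) obstacle, and it is precisely the place where the boundedness condition $\alpha+d_0\leq1$ is invoked; without it the $x$-monotonicity of $f_1$ would break down near $x=0$, where $\frac{\alpha}{(1+x)^2}$ is largest.

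With all four partials non-negative, both $f_1$ and $f_2$ are non-decreasing in $x$ and in $y$, so by the mean value inequality applied coordinate-wise, whenever $(x_1,y_1)\preceq_{NE}(x_2,y_2)$ we obtain $W_0(x_1,y_1)\preceq_{NE}W_0(x_2,y_2)$. That is, $W_0$ is monotone with respect to the $NE$ ordering and hence cooperative, which is the assertion of the lemma.
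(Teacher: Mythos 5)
Your proposal is correct, but it proceeds by a different mechanism than the paper. You use the infinitesimal criterion: you check that all four entries of the Jacobian $J_{W_0}$ are non-negative on $\mathbb{R}^2_{+}$ under (\ref{parametr}) and then pass from signs of partial derivatives to monotonicity via the mean value inequality along coordinate segments (valid since $\mathbb{R}^2_{+}$ is convex and $W_0$ is smooth there). The paper instead works with finite differences directly: it writes the exact algebraic identities
\[
x'_1-x'_2=\frac{\beta(y_1-y_2)\left(\gamma(y_1+y_2)+y_1y_2\right)}{(\gamma+y_1)(\gamma+y_2)}+(x_1-x_2)\left(1-d_0-\frac{\alpha}{(1+x_1)(1+x_2)}\right),
\qquad
y'_1-y'_2=\frac{\alpha(x_1-x_2)}{(1+x_1)(1+x_2)}+(1-\mu)(y_1-y_2),
\]
and reads off that the coefficients of $(x_1-x_2)$ and $(y_1-y_2)$ are non-negative, so $(x_1,y_1)\preceq_{NE}(x_2,y_2)$ implies $W_0(x_1,y_1)\preceq_{NE}W_0(x_2,y_2)$. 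The two arguments hinge on the same key estimate: in the paper it is $\frac{\alpha}{(1+x_1)(1+x_2)}\leq\alpha\leq1-d_0$, in yours it is $\frac{\alpha}{(1+x)^2}\leq\alpha\leq1-d_0$, both consequences of $\alpha+d_0\leq1$ with $x,x_1,x_2\geq0$; your write-up has the merit of flagging explicitly that this is where the hypothesis $\alpha+d_0\leq1$ from (\ref{parametr}) enters, a point the paper leaves implicit. What the paper's factorization buys is a purely algebraic proof needing no calculus, no convexity of the domain, and no smoothness; what your route buys is a standard, systematic test (sign pattern of the Jacobian) that reuses the matrix already computed for the stability analysis and generalizes immediately to other smooth maps on convex domains.
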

\begin{proof} By (\ref{systemacase}) we have
\begin{equation}\label{cooper}
\begin{array}{ll}
x'_1-x'_2=\frac{\beta(y_1-y_2)\left(\gamma(y_1+y_2)+y_1y_2\right)}{(\gamma+y_1)(\gamma+y_2)}+(x_1-x_2)\left(1-d_0-\frac{\alpha}{(1+x_1)(1+x_2)}\right),\\
y'_1-y'_2=\frac{\alpha(x_1-x_2)}{(1+x_1)(1+x_2)}+(1-\mu)(y_1-y_2).
\end{array}
\end{equation}
Take $(x_1, y_1)\preceq_{NE}(x_2, y_2)\in\mathbb{R}^{2}_{+}$ then $x_1\leq x_2$ and $y_1\leq y_2$. By the last equalities from (\ref{cooper}) one can see that $x'_1\leq x'_2$ and $y'_1\leq y'_2$, i.e.,
\[(x'_1, y'_1)=W_0(x_1, y_1)\preceq_{NE}(x'_2, y'_2)=W_0(x_2, y_2).\]
This completes the proof.
\end{proof}

The following theorem  describes the trajectory of any point $(x^{(0)}, y^{(0)})$ in invariant sets $\Omega_1(x_1^*,y_1^*)$ and $\Omega_2(x_1^*,y_1^*)$.
\begin{thm}\label{2fix} Assume that (\ref{parametr}) holds and let $\beta=\nu$. For the operator $W_{0}$ and any initial point $(x^{(0)}, y^{(0)})$, the following hold
$$\lim_{n\to \infty}x^{(n)}=\left\{\begin{array}{ll}
0, \ \ \ \ \ \ \mbox{if} \ \ (x^{(0)}, y^{(0)})\in\Omega_1(x_1^*,y_1^*), \\[2mm]
x_1^*, \ \ \mbox{if} \ \ (x^{(0)}, y^{(0)})\in\Omega_2(x_1^*,y_1^*),
\end{array}\right.$$
$$\lim_{n\to \infty}y^{(n)}=\left\{\begin{array}{ll}
0, \ \ \ \ \ \ \mbox{if} \ \ (x^{(0)}, y^{(0)})\in\Omega_1(x_1^*,y_1^*), \\[2mm]
y_1^*, \ \ \ \ \  \mbox{if} \ \ (x^{(0)}, y^{(0)})\in\Omega_2(x_1^*,y_1^*),
\end{array}\right.$$
where $(x_1^*,y_1^*)$ is the fixed point defined by $(\ref{fix123})$.
\end{thm}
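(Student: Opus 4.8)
The plan is to obtain both limits from the monotone–convergence result Proposition \ref{f1f2increasing}, exploiting that $W_0$ is cooperative (Lemma \ref{coop}) and that $\Omega_1,\Omega_2$ are invariant. The two regions are not symmetric: the closed rectangle attached to $\Omega_2$ contains a single fixed point, so Proposition \ref{f1f2increasing} applies verbatim, whereas the closed rectangle attached to $\Omega_1$ contains both $(0,0)$ and $(x_1^*,y_1^*)$, which defeats hypothesis (ii) of that proposition and needs an extra idea.

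For $\Omega_2$, the closed rectangle $R_2=[x_1^*,\omega_1]\times[y_1^*,\omega_2]$ equals $\Omega_2$ together with the fixed point $(x_1^*,y_1^*)$, hence is invariant and is mapped into itself by $W_0$. By Lemma \ref{coop} the coordinate maps $f_1,f_2$ are non-decreasing in both arguments, so hypothesis (i) holds. When $\beta=\nu$, every fixed point of $W_0$ has $x$-coordinate $0$ or $x_1^*$ (Proposition \ref{fixed} and equation (\ref{x^3})), and $R_2\subset\{x\ge x_1^*\}$; therefore the only fixed point of $W_0$ in $R_2$ is its corner $(x_1^*,y_1^*)$, which forces hypothesis (ii). Proposition \ref{f1f2increasing} then gives convergence of every orbit in $R_2$, in particular of every orbit starting in $\Omega_2$, to $(x_1^*,y_1^*)$.

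For $\Omega_1$ I would construct a family of nested invariant boxes along the nullcline $y=d(x)$ with $d(x):=\frac{\alpha x}{\mu(1+x)}$, on which the second fixed-point relation holds, so that $f_2(b,d(b))=d(b)$ for every $b$. The decisive point is that the inequality $f_1(b,d(b))\le b$ is equivalent, after clearing denominators, to $Q(b)\ge 0$, where $Q$ is exactly the quadratic appearing in (\ref{x^3}); since $\beta=\nu$ makes $x_1^*$ a double root, $Q(b)=d_0\mu(\alpha+\gamma\mu)(b-x_1^*)^2\ge 0$, with strict inequality for $b\neq x_1^*$. Hence for each $b\in(0,x_1^*)$ we have $f_1(b,d(b))<b$ and $f_2(b,d(b))=d(b)$, so by monotonicity $R_1(b):=[0,b]\times[0,d(b)]$ satisfies $W_0(R_1(b))\subset R_1(b)$. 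As $b<x_1^*$, the unique fixed point of $W_0$ in $R_1(b)$ is $(0,0)$, and Proposition \ref{f1f2increasing} yields convergence to $(0,0)$ on all of $R_1(b)$. Letting $b\uparrow x_1^*$, these boxes exhaust $[0,x_1^*)\times[0,y_1^*)$, so every point of $\Omega_1$ with both coordinates strictly below $(x_1^*,y_1^*)$ is carried to $(0,0)$.

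It remains to treat the two edges of $\Omega_1$ adjacent to the corner, namely $\{x_1^*\}\times[0,y_1^*)$ and $[0,x_1^*)\times\{y_1^*\}$. Using the fixed-point identities $\mu y_1^*=\frac{\alpha x_1^*}{1+x_1^*}$ and $\frac{\beta (y_1^*)^2}{\gamma+y_1^*}=d_0x_1^*+\frac{\alpha x_1^*}{1+x_1^*}$, the monotonicity of $t\mapsto\frac{t}{1+t}$ and $t\mapsto\frac{\beta t^2}{\gamma+t}$, and the bound $\alpha+d_0\le 1$ from (\ref{parametr}), a one-step estimate shows that $W_0$ maps each such edge point strictly into $[0,x_1^*)\times[0,y_1^*)$, reducing it to the previous case. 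I expect the main obstacle to be precisely the $\Omega_1$ analysis: recognizing that the second fixed point is circumvented by the nullcline supersolutions and that the supersolution inequality is controlled by the perfect square $Q(b)=d_0\mu(\alpha+\gamma\mu)(b-x_1^*)^2$ forced by $\beta=\nu$; the boundary-edge estimates are then routine.
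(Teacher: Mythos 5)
Your proof is correct, and for the half of the theorem concerning $\Omega_2(x_1^*,y_1^*)$ it coincides with the paper's: both verify hypotheses (i)--(ii) of Proposition \ref{f1f2increasing} on the rectangle $[x_1^*,\omega_1]\times[y_1^*,\omega_2]$, whose only fixed point is $(x_1^*,y_1^*)$. For $\Omega_1(x_1^*,y_1^*)$ your route is genuinely different. The paper invokes Proposition \ref{basin} (the Dancer--Hess dichotomy) on the order interval $\mathopen{[\![}(0,0),(x_1^*,y_1^*)\mathclose{]\!]}_{NE}$: there are no fixed points in it other than the endpoints, and since $(0,0)$ is attracting by Theorem \ref{type}, the interior of the interval lies in its basin. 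You avoid both Proposition \ref{basin} and the local classification altogether: you build nested invariant boxes $[0,b]\times[0,d(b)]$ with corners on the nullcline $y=d(x)=\frac{\alpha x}{\mu(1+x)}$, observe that for $b>0$ the quantity $f_1(b,d(b))-b$ has the sign of $-Q(b)$, where $Q$ is the quadratic of (\ref{x^3}), and use that $\beta=\nu$ makes the discriminant $D$ vanish, so $Q(b)=d_0\mu(\alpha+\gamma\mu)(b-x_1^*)^2\geq 0$; hence each box with $b<x_1^*$ is invariant, contains $(0,0)$ as its unique fixed point, and Proposition \ref{f1f2increasing} applies on it, after which exhaustion as $b\uparrow x_1^*$ plus edge estimates finish the proof. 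What this buys: a self-contained, purely monotonicity-based argument that needs neither the abstract order-theoretic Proposition \ref{basin} nor the eigenvalue computations behind Theorem \ref{type}, and that explicitly handles the boundary edges $\{x_1^*\}\times[0,y_1^*)$ and $[0,x_1^*)\times\{y_1^*\}$ of $\Omega_1$, which lie outside the interior of the order interval and are passed over silently in the paper's one-line argument. The price is length, and there is one small slip to repair: when $\mu=1$ (which (\ref{parametr}) allows), a vertical-edge point $(x_1^*,y)$ with $y<y_1^*$ maps to $(x',y_1^*)$ with $x'<x_1^*$, i.e.\ onto the horizontal edge rather than strictly into the open box, so your one-step reduction should be stated as a two-step reduction there; with that adjustment the argument goes through unchanged.
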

\begin{proof} For $\beta=\nu$ (see Theorem \ref{type}), $z_{1,1}=(0,0)$ is attracting, while $z_{1,2}=(x_1^*,y_1^*)$ is a non-hyperbolic.
By applying Proposition \ref{basin} to the ordered interval $\mathopen{[\![} z_{1,1}, z_{1,2} \mathclose{]\!]}_{NE}$, we can conclude that the trajectory of an arbitrary initial point in $\Omega_1(x_1^*,y_1^*)$ converges to zero. Proposition \ref{f1f2increasing} implies that the trajectory of any initial point selected from set $\Omega_2(x_1^*,y_1^*)$ converges to a limit point $(x^*_1, y^*_1)$. See Fig. 1(a) for numerical values and the visual illustration.
\end{proof}

The following theorem  describes the trajectory of any point $(x^{(0)}, y^{(0)})$ in invariant sets $\Omega_1(x_2^*,y_2^*)$ and $\Omega_2(x_2^*,y_2^*)$.
\begin{thm}\label{3fix} Assume that (\ref{parametr}) holds and let $\beta>\nu$. For the operator $W_{0}$ and any initial point $(x^{(0)}, y^{(0)})$, the following hold
$$\lim_{n\to \infty}x^{(n)}=\left\{\begin{array}{ll}
0, \ \ \ \ \ \ \mbox{if} \ \ (x^{(0)}, y^{(0)})\in\Omega_1(x_2^*,y_2^*), \\[2mm]
x_3^*, \ \ \mbox{if} \ \ (x^{(0)}, y^{(0)})\in\Omega_2(x_2^*,y_2^*),
\end{array}\right.$$
$$\lim_{n\to \infty}y^{(n)}=\left\{\begin{array}{ll}
0, \ \ \ \ \ \ \mbox{if} \ \ (x^{(0)}, y^{(0)})\in\Omega_1(x_2^*,y_2^*), \\[2mm]
y_3^*, \ \ \ \ \  \mbox{if} \ \ (x^{(0)}, y^{(0)})\in\Omega_2(x_2^*,y_2^*),
\end{array}\right.$$
where $(x_2^*,y_2^*)$, $(x_3^*,y_3^*)$ are the fixed point defined by $(\ref{fix123})$.
\end{thm}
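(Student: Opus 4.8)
The plan is to mirror the proof of Theorem \ref{2fix}, but now exploiting that the middle fixed point $(x_2^*,y_2^*)$ is \emph{repelling} while a third, attracting fixed point $(x_3^*,y_3^*)$ is present. Throughout I would use three ingredients already available: cooperativity of $W_0$ (Lemma \ref{coop}), the trichotomy of Proposition \ref{basin}, and the monotone-convergence criterion of Proposition \ref{f1f2increasing}. Since $x_2^*<x_3^*$ and $y_i^*=\frac{\alpha x_i^*}{\mu(1+x_i^*)}$ is increasing in $x_i^*$, the three fixed points satisfy $(0,0)\prec_{NE}(x_2^*,y_2^*)\prec_{NE}(x_3^*,y_3^*)\prec_{NE}(\omega_1,\omega_2)$ (one checks $\omega_1>x_3^*$ from the sum of roots of (\ref{x^3}) and $\omega_2>y_3^*$ since $\frac{x_3^*}{1+x_3^*}<1$), so they lie in this order along the diagonal of $\Omega$, and the invariant sets in the statement are taken relative to the repeller $(x_2^*,y_2^*)$.

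For $\Omega_1(x_2^*,y_2^*)$ I would apply Proposition \ref{basin} to $\mathopen{[\![}(0,0),(x_2^*,y_2^*)\mathclose{]\!]}_{NE}$, whose only fixed points are the two endpoints (the point $(x_3^*,y_3^*)$ is excluded because $x_3^*>x_2^*$). A repelling fixed point has a basin of attraction with empty interior, so the dichotomy of Proposition \ref{basin} forces the open interior of this interval into the basin of $(0,0)$. It then remains to absorb the outer edges of $\Omega_1$: for a point with $x=x_2^*$ or $y=y_2^*$ (but $\ne(x_2^*,y_2^*)$) one step of $W_0$ lands strictly inside the interval, a short direct estimate using the fixed-point identities $\mu y_2^*=\frac{\alpha x_2^*}{1+x_2^*}$ and $d_0x_2^*+\frac{\alpha x_2^*}{1+x_2^*}=\frac{\beta (y_2^*)^2}{\gamma+y_2^*}$ together with the strict monotonicity of $t\mapsto\frac{\beta t^2}{\gamma+t}$; the edges $x=0$ and $y=0$ are handled the same way or by cooperative sandwiching. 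Hence every orbit in $\Omega_1(x_2^*,y_2^*)$ enters the interior and converges to $(0,0)$.

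The set $\Omega_2(x_2^*,y_2^*)$ is where I expect the main obstacle. The interval $\mathopen{[\![}(x_2^*,y_2^*),(\omega_1,\omega_2)\mathclose{]\!]}_{NE}$ contains \emph{two} fixed points, $(x_2^*,y_2^*)$ and $(x_3^*,y_3^*)$, so neither Proposition \ref{basin} (which needs exactly the two endpoints) nor Proposition \ref{f1f2increasing} (which needs a unique fixed point) applies to it directly; I would therefore trap each orbit between two orbits with known limits. First, the same one-step estimate as above shows that $W_0$ pushes the outer edges of $\Omega_2(x_2^*,y_2^*)$ strictly into $\{x>x_2^*,\ y>y_2^*\}$ (on the horizontal edge $y=y_2^*$ this may take a second step, since one step only gives $x'\ge x_2^*$), so after replacing $p$ by $W_0^{k}(p)$ with $k\le 2$ we may assume $(x_2^*,y_2^*)\ll p\preceq_{NE}(\omega_1,\omega_2)$. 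Choosing $q=(x_2^*+\varepsilon,y_2^*+\varepsilon)$ with $\varepsilon>0$ small enough that $(x_2^*,y_2^*)\ll q\preceq_{NE}p$ and $q\ll(x_3^*,y_3^*)$ places $q$ in the open interior of $\mathopen{[\![}(x_2^*,y_2^*),(x_3^*,y_3^*)\mathclose{]\!]}_{NE}$, whose only fixed points are its endpoints; since $(x_2^*,y_2^*)$ is repelling, Proposition \ref{basin} gives $W_0^n(q)\to(x_3^*,y_3^*)$. For the upper bound, the rectangle $[x_3^*,\omega_1]\times[y_3^*,\omega_2]$ is invariant and contains $(x_3^*,y_3^*)$ as its only fixed point, so Proposition \ref{f1f2increasing} applies and yields $W_0^n(\omega_1,\omega_2)\to(x_3^*,y_3^*)$. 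By cooperativity, $W_0^n(q)\preceq_{NE}W_0^n(p)\preceq_{NE}W_0^n(\omega_1,\omega_2)$ for all $n$, and a squeeze gives $W_0^n(p)\to(x_3^*,y_3^*)$. The delicate points to verify in full are that $W_0$ drives the whole boundary of $\Omega_2(x_2^*,y_2^*)$ strictly off the repeller within two steps, and that $\omega_1\ge x_3^*,\ \omega_2\ge y_3^*$, so that $(\omega_1,\omega_2)$ indeed sits in the rectangle where Proposition \ref{f1f2increasing} is available.
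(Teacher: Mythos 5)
Your proposal is correct and follows essentially the same strategy as the paper's proof: Proposition \ref{basin} applied to the order intervals $\mathopen{[\![}(0,0),(x_2^*,y_2^*)\mathclose{]\!]}_{NE}$ and $\mathopen{[\![}(x_2^*,y_2^*),(x_3^*,y_3^*)\mathclose{]\!]}_{NE}$ (with the repelling character of $(x_2^*,y_2^*)$ resolving the dichotomy), Proposition \ref{f1f2increasing} on the rectangle $[x_3^*,\omega_1]\times[y_3^*,\omega_2]$, and a cooperative squeeze via Lemma \ref{coop} for the remaining points. The only difference is organizational---the paper splits $\Omega_2(x_2^*,y_2^*)$ into four subrectangles $\Omega^1,\dots,\Omega^4$ and sandwiches only the interiors of $\Omega^2$ and $\Omega^4$, whereas you sandwich every orbit between a point just above the repeller and the corner $(\omega_1,\omega_2)$; your added attention to the edges $x=x_2^*$, $y=y_2^*$ and the verifications $x_3^*<\omega_1$, $y_3^*<\omega_2$ patches details the paper leaves implicit.
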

\begin{proof}
Under the condition $\beta>\nu$ (see Theorem \ref{type}), the fixed points $z_{2,1}=(0,0)$ and $z_{2,3}=(x_3^*,y_3^*)$ are attracting, while $z_{2,2}=(x_2^*,y_2^*)$ is a repelling fixed point. Then, by
applying Proposition \ref{basin} to the ordered interval $\mathopen{[\![} z_{2,1}, z_{2,2} \mathclose{]\!]}_{NE}$, we can conclude that the trajectory of an arbitrary initial point in $\Omega_1(x_2^*,y_2^*)$ converges to zero.

Recall that $\omega_1$ and $\omega_2$ are defined by (\ref{Omega12}).
We divide the set $\Omega_2(x_2^*,y_2^*)$ into four parts:
\[\Omega^1(x_3^*,y_3^*)=\left\{(x,y)\in\Omega_2(x_2^*,y_2^*), \ \ x_3^*\leq x\leq \omega_1, \ \ y_3^*\leq y\leq \omega_2\right\},\]
\[\Omega^2(x_3^*,y_3^*)=\left\{(x,y)\in\Omega_2(x_2^*,y_2^*), \ \ x_2^*\leq x\leq x_3^*, \ \ y_3^*\leq y\leq \omega_2\right\},\]
\[\Omega^3(x_3^*,y_3^*)=\left\{(x,y)\in\Omega_2(x_2^*,y_2^*), \ \ x_2^*\leq x\leq x_3^*, \ \ y_2^*\leq x\leq y_3^*\right\},\]
\[\Omega^4(x_3^*,y_3^*)=\left\{(x,y)\in\Omega_2(x_2^*,y_2^*), \ \ x_3^*\leq x\leq \omega_1, \ \ y_2^*\leq x\leq y_3^*\right\}.\]
By applying Proposition \ref{basin} to the ordered interval $\mathopen{[\![} z_{2,2}, z_{2,3} \mathclose{]\!]}_{NE}$, we can conclude that the trajectory of an arbitrary initial point in $\Omega^3(x_3^*,y_3^*)$ converges to $(x^*_3, y^*_3)$. Proposition \ref{f1f2increasing} implies that the trajectory of any initial point in $\Omega^1(x_3^*,y_3^*)$ converges to $(x^*_3, y^*_3)$.

Assume that $\left(x^{(0)},y^{(0)}\right)\in int\left(\Omega^2(x_3^*,y_3^*)\right)$ or $\left(x^{(0)},y^{(0)}\right)\in int\left(\Omega^4(x_3^*,y_3^*)\right)$. Then, there exists $\left(\overline{x^{(0)}},\overline{y^{(0)}}\right)\in int\left(\Omega^3(x_3^*,y_3^*)\right)$ such that $$\left(\overline{x^{(0)}},\overline{y^{(0)}}\right)\preceq_{NE}\left(x^{(0)},y^{(0)}\right)$$ and $\left(\widetilde{x^{(0)}},\widetilde{y^{(0)}}\right)\in int\left(\Omega^1(x_3^*,y_3^*)\right)$ such that $$\left(x^{(0)},y^{(0)}\right)\preceq_{NE}\left(\widetilde{x^{(0)}},\widetilde{y^{(0)}}\right).$$ By monotonicity of $W_0$, we have
\[W_0^n\left(\overline{x^{(0)}},\overline{y^{(0)}}\right)\preceq_{NE}W_0^n\left(x^{(0)},y^{(0)}\right)\preceq_{NE}W_0^n\left(\widetilde{x^{(0)}},\widetilde{y^{(0)}}\right)\]
which implies $W_0^n\left(x^{(0)},y^{(0)}\right)\rightarrow (x^*_3, y^*_3)$ as $n\rightarrow\infty$. This completes the proof. See Fig. 1(a) for numerical values and the visual illustration.
\end{proof}

\begin{center}\label{fig1}
   \begin{tikzpicture} [scale=0.60]
   \draw[->, thick] (0,0) -- (0,8);
   \draw[->, thick] (0,0) -- (11,0);
   \draw[-, thick] (3,0) -- (3,7.05);
   \draw[-, thick] (0,3) -- (10,3);
   \draw[thick] (0,7) --  (10, 7);
   \draw[thick] (10,0) --  (10, 7);

   \fill[brown, opacity=0.3] (0,0) -- (3,0) -- (3,3) -- (0,3) -- cycle;
   \fill[yellow!50, opacity=0.3] (3,3) -- (10,3) -- (10,7) -- (3,7) -- cycle;
   \filldraw[red] (0,0) circle (3pt);
   \filldraw[red] (3,3) circle (3pt);
\filldraw[black] (2.8,2.9) circle (1.5pt);
\filldraw[black] (0.2,2.9) circle (1.5pt);
\filldraw[black] (2.7, 0.5) circle (1.5pt);
\filldraw[black] (9.7, 4) circle (1.5pt);
\filldraw[black] (9, 6.8) circle (1.5pt);
\filldraw[black] (4, 6.8) circle (1.5pt);
\filldraw[black] (9.8, 3.1) circle (1.5pt);
   \node[left] at (10.5,-0.5){$\omega_1$};
   \node[above] at (11,0){$x$};
   \node[left] at (3.3,-0.5){$x^*_1$};

   \node[left] at (0,7){$\omega_2$};
   \node[right] at (0,8){$y$};
   \node[left] at (0,3){$y^*_1$};
   \node[left] at (0,-0.5){$0$};
\node[above] at (5,-2){$(a)$};
   \draw[->, thick, dotted, line width=1pt] (2.8,2.9).. controls (2,2.7) and (0.8,2.6) .. (0.2, 0.2);
   \draw[->, thick, dotted, line width=1pt] (0.2,2.9).. controls (0.9,2.8)  .. (0.8, 2.1);
   \draw[->, thick, dotted, line width=1pt] (2.7, 0.5).. controls (2.5,1.5) and (2,2) .. (0.8, 1.2);

  \draw[->, thick, dotted, line width=1pt] (9.7,4).. controls (9,4) and (7,3.9).. (3.3, 3.2);
  \draw[->, thick, dotted, line width=1pt] (9,6.8).. controls (9,6) and (8.8,5) .. (6.9, 4);
  \draw[->, thick, dotted, line width=1pt] (4,6.8).. controls (6.2,6.3) and (6.2, 5) .. (4.5, 3.5);
  \draw[->, thick, dotted, line width=1pt] (9.8,3.1).. controls (8.9,3.7) .. (7.9, 3.6);
     \end{tikzpicture}
\begin {tikzpicture} [scale=0.60]
   \draw[->, thick] (0,0) -- (0,8);
   \draw[->, thick] (0,0) -- (11,0);
\draw[-, thick] (2.5,0) -- (2.5,7.05);
   \draw[-, thick] (0,2) -- (10,2);

   \draw[-, thick, dashed] (5.5,0) -- (5.5,7.05);
\draw[-, thick, dashed] (0,3.5) -- (10,3.5);

   \draw[thick] (0,7) --  (10, 7);
   \draw[thick] (10,0) --  (10, 7);
   \fill[brown, opacity=0.3] (0,0) -- (2.5,0) -- (2.5,2) -- (0,2) -- cycle;
   \fill[yellow!50, opacity=0.3] (2.5,2) -- (10,2) -- (10,7) -- (2.5,7) -- cycle;

\filldraw[red] (2.5,2) circle (3pt);
\filldraw[red] (0,0) circle (3pt);
\filldraw[red] (5.5,3.5) circle (3pt);
\filldraw[black] (2.1,1.8) circle (1.5pt);
\filldraw[black] (0.2,1.9) circle (1.5pt);
\filldraw[black] (2.4, 0.5) circle (1.5pt);
\filldraw[black] (2.8,2.2) circle (1.5pt);
\filldraw[black] (2.8, 5.5) circle (1.5pt);
\filldraw[black] (6.2,2.2) circle (1.5pt);
\filldraw[black] (9.7,4.2) circle (1.5pt);
\filldraw[black] (5,6.7) circle (1.5pt);
\filldraw[black] (9.5,2.2) circle (1.5pt);
   \node[left] at (10.5,-0.5){$\omega_1$};
   \node[above] at (11,0){$x$};
   \node[above] at (2.5,-1){$x^*_2$};
   \node[above] at (5.5,-1){$x^*_3$};
   \node[left] at (0,7){$\omega_2$};
   \node[right] at (0,8){$y$};
    \node[left] at (0,-0.5){$0$};
\node[left] at (0,2){$y^*_2$};
   \node[left] at (0,3.5){$y^*_3$};

   \node[above] at (5,-2){$(b)$};
    \draw[->, thick, dotted, line width=1pt] (2.1,1.8).. controls (2.2,1.8) and (1.2,1.6) .. (0.3, 0.25);
  \draw[->, thick, dotted, line width=1pt] (0.2,1.9).. controls (0.9,1.8)  .. (0.8, 1.1);
   \draw[->, thick, dotted, line width=1pt] (2.4, 0.5).. controls (2,1.6) and (1.2,1.3) .. (1, 0.9);

    \draw[->, thick, dotted, line width=1pt] (2.8,2.2) .. controls (3,2.3) and (4,3) ..(5.3, 3.4);
    \draw[->, thick, dotted, line width=1pt] (2.8, 5.5).. controls (3,4) and (4,3.5) ..(4.8, 3.3);
    \draw[->, thick, dotted, line width=1pt] (6.2,2.2).. controls (5.3,2.3) and (5, 2.5) .. (5.1, 3.1);

  \draw[->, thick, dotted, line width=1pt] (9.7,4.2) .. controls (8,4.1) and (6,3.7) ..(5.7, 3.6);
  \draw[->, thick, dotted, line width=1pt] (5,6.7).. controls (6.7,6.5) and (7,5) .. (5.9, 3.8);
  \draw[->, thick, dotted, line width=1pt] (9.5,2.2).. controls (9,4) and (8,4) .. (7, 3.7);
\end{tikzpicture}
\end{center}
{\footnotesize{{Figure 1.}(a) Visual illustration of Theorem \ref{2fix} when $\beta=\nu$, $\alpha=0.4, \beta=\frac{9}{4}(2+\sqrt{3}), \gamma=1, d_0=0.5, \mu=0.6$, in the case when $(x_1^*,y_1^*)$ is a non-hyperbolic point. (b) Visual illustration of Theorem \ref{3fix} when $\beta>\nu$, $\alpha=0.4, \beta=\frac{9}{4}(2+\sqrt{3}), \gamma=1, d_0=0.5, \mu=0.6$. The case when $(x_2^*,y_2^*)$ is a repeller and $(0,0)$ and $(x_3^*,y_3^*)$ are attracting points.}}

\subsection{On the set $\Omega\setminus\left(\Omega_1(x^*,y^*)\bigcup\Omega_2(x^*,y^*)\right)$}\

In the following examples, we show trajectories of initial points from the set $\Omega\setminus\left(\Omega_1\bigcup\Omega_2\right)$.

Let us consider the operator $W_0$ with parameter values $\alpha=0.4, \beta=\frac{9}{4}(2+\sqrt{3}), \gamma=1, d_0=0.5, \mu=0.6,$ satisfying the condition $\beta=\nu$.
 Then by (\ref{fix123}), we get $\left(x_1^*,y_1^*\right)=\left(\frac{3\sqrt{3}}{5}, 9-5\sqrt{3}\right)$.
\begin{ex}\label{exam1}
For initial points $z_1^{(0)}=(0.1,0.5)$ and $z_2^{(0)}=(2.5,0.1)$, the trajectory of the system (\ref{systemacase}) is shown in Fig.2(a). In this case, the system converges to the origin, i.e., $$\lim_{n\to \infty}x^{(n)}=0, \ \lim_{n\to \infty}y^{(n)}=0.$$
For initial points $z_3^{(0)}=(0.1,0.6)$ and $z_4^{(0)}=(3,0.1)$, the trajectory of the system (\ref{systemacase}) is also shown in Fig.2(a) (refer to the same figure for both cases). Here, the first coordinate of the trajectory converges to $\frac{3\sqrt{3}}{5}$, while the second coordinate has a limit point of $9-5\sqrt{3}$:
$$\lim_{n\to \infty}x^{(n)}=\frac{3\sqrt{3}}{5}, \ \lim_{n\to \infty}y^{(n)}=\frac{\alpha}{\mu}=9-5\sqrt{3}.$$
\end{ex}

We consider the operator $W_0$ with the following parameter values: $\alpha=0.4, \beta=9, \gamma=1, d_0=0.5,$ and $\mu=0.6$. Notably, these parameters satisfy the condition $\beta>\nu$.  By (\ref{fix123}), we obtain the fixed points $\left(x_2^*,y_2^*\right)=\left(\frac{3}{5}, \frac{1}{4}\right)$ and $\left(x_3^*,y_3^*\right)=\left(\frac{9}{5}, \frac{3}{7}\right)$.

\begin{ex}\label{exam2}
Figure 2(b) illustrates the trajectories of system (\ref{systemacase}) for initial points $z_1^{(0)}=(0.1,0.3)$ and $z_2^{(0)}=(1,0.1)$. In both cases, the system converges to the origin.

Considering initial points $z_3^{(0)}=(0.5,0.5)$,  $z_4^{(0)}=(0.5,0.66)$, $z_5^{(0)}=(2,0.1)$, and $z_6^{(0)}=(4,0.3)$, the trajectory of system (\ref{systemacase}) is shown in Fig. 2(b). In this case, the first coordinate of the trajectory appears to converge towards $\frac{9}{5}$, while the second coordinate appears to have a limit point of  $\frac{3}{7}$.
\end{ex}

\begin{center}\label{fig2}
   \begin{tikzpicture} [scale=0.60]
   \draw[->, thick] (0,0) -- (0,8);
   \draw[->, thick] (0,0) -- (11,0);
   \draw[-, thick] (3,0) -- (3,7.05);
   \draw[-, thick] (0,3) -- (10,3);
   \draw[thick] (0,7) --  (10, 7);
   \draw[thick] (10,0) --  (10, 7);

   \fill[brown, opacity=0.3] (0,0) -- (3,0) -- (3,3) -- (0,3) -- cycle;
   \fill[yellow!50, opacity=0.3] (3,3) -- (10,3) -- (10,7) -- (3,7) -- cycle;
   \filldraw[red] (0,0) circle (3pt);
   \filldraw[red] (3,3) circle (3pt);
\filldraw[black] (0.2,5) circle (1.5pt);
\filldraw[black] (8, 0.5) circle (1.5pt);
\filldraw[black] (4.2, 0.5) circle (1.5pt);
\filldraw[black] (2.6, 2.8) circle (1.5pt);
\filldraw[black] (2, 6.5) circle (1.5pt);
   \node[left] at (10.5,-0.5){$\omega_1$};
   \node[above] at (11,0){$x$};
   \node[left] at (3.3,-0.5){$x^*_1$};

   \node[left] at (0,7){$\omega_2$};
   \node[right] at (0,8){$y$};
   \node[left] at (0,3){$y^*_1$};
   \node[left] at (0,-0.5){$0$};
\node[above] at (5,-2){$(a)$};
\node[right] at (-0.1,5.4){$z_1^{(0)}$};
\node[right] at (4.1,0.6){$z_2^{(0)}$};
\node[right] at (1.8,6.92){$z_3^{(0)}$};
\node[right] at (8.0,0.7){$z_4^{(0)}$};
   \draw[->, thick, dotted, line width=1pt] (2.6,2.8).. controls (2,2.7) and (0.8,2.6) .. (0.2, 0.2);
   \draw[->, thick, dotted, line width=1pt] (0.2,5).. controls (1.4,4.4)  .. (2.5, 2.8);
   \draw[->, thick, dotted, line width=1pt] (4.2, 0.5).. controls (3.7,2.7) and (2.9,2.5) .. (2.6, 2.7);

   \draw[->, thick, dotted, line width=1pt] (6,3.5)-- (3.3, 3.1);
  \draw[->, thick, dotted, line width=1pt] (2,6.5).. controls (3 ,6.3) and (4, 5) .. (3.5, 3.4);
  \draw[-, thick, dotted, line width=1pt] (8,0.5).. controls (7,3.5) .. (6, 3.5);
     \end{tikzpicture}
\begin {tikzpicture} [scale=0.60]
   \draw[->, thick] (0,0) -- (0,8);
   \draw[->, thick] (0,0) -- (11,0);
\draw[-, thick] (2.5,0) -- (2.5,7.05);
   \draw[-, thick] (0,2) -- (10,2);

   \draw[-, thick, dashed] (5.5,0) -- (5.5,7.05);
\draw[-, thick, dashed] (0,3.5) -- (10,3.5);

   \draw[thick] (0,7) --  (10, 7);
   \draw[thick] (10,0) --  (10, 7);
   \fill[brown, opacity=0.3] (0,0) -- (2.5,0) -- (2.5,2) -- (0,2) -- cycle;
   \fill[yellow!50, opacity=0.3] (2.5,2) -- (10,2) -- (10,7) -- (2.5,7) -- cycle;

\filldraw[red] (2.5,2) circle (3pt);
\filldraw[red] (0,0) circle (3pt);
\filldraw[red] (5.5,3.5) circle (3pt);
\filldraw[black] (2.1,1.8) circle (1.5pt);
\filldraw[black] (0.2,2.9) circle (1.5pt);
\filldraw[black] (3.4, 0.3) circle (1.5pt);
\filldraw[black] (2, 5.2) circle (1.5pt);
\filldraw[black] (6.2,0.3) circle (1.5pt);
\filldraw[black] (2,6.7) circle (1.5pt);
\filldraw[black] (9.5,0.6) circle (1.5pt);
   \node[left] at (10.5,-0.5){$\omega_1$};
   \node[above] at (11,0){$x$};
   \node[above] at (2.5,-1){$x^*_2$};
   \node[above] at (5.5,-1){$x^*_3$};
   \node[left] at (0,7){$\omega_2$};
   \node[right] at (0,8){$y$};
    \node[left] at (0,-0.5){$0$};
\node[left] at (0,2){$y^*_2$};
   \node[left] at (0,3.5){$y^*_3$};
   \node[right] at (-0.1,3.35){$z_1^{(0)}$};
\node[right] at (3.25,0.55){$z_2^{(0)}$};
\node[right] at (1.15,5.4){$z_3^{(0)}$};
\node[right] at (1.1,6.94){$z_4^{(0)}$};
\node[right] at (6.15,0.5){$z_5^{(0)}$};
\node[right] at (9.44,0.75){$z_6^{(0)}$};

   \node[above] at (5,-2){$(b)$};
    \draw[->, thick, dotted, line width=1pt] (2.1,1.8).. controls (2.2,1.8) and (1.2,1.6) .. (0.3, 0.25);
  \draw[->, thick, dotted, line width=1pt] (0.2,2.9).. controls (0.9,2.8)  .. (2, 1.9);
   \draw[->, thick, dotted, line width=1pt] (3.4, 0.3).. controls (3,1) and (2.5,1.6) .. (2.2, 1.7);

    \draw[->, thick, dotted, line width=1pt] (4.8,3.3) --(5.3, 3.4);
    \draw[-, thick, dotted, line width=1pt] (2, 5.2).. controls (3,4) and (4,3.5) ..(4.8, 3.3);
    \draw[->, thick, dotted, line width=1pt] (6.2,0.3).. controls (5.5,0.9) and (3, 2.5) .. (5.1, 3.1);

  \draw[->, thick, dotted, line width=1pt] (7,4) -- (6.2, 3.7);
  \draw[->, thick, dotted, line width=1pt] (2,6.7).. controls (6.7,6.5) and (7,5) .. (5.9, 3.8);
  \draw[-, thick, dotted, line width=1pt] (9.5,0.6).. controls (9,4) and (8,4) .. (7, 4);
\end{tikzpicture}
\end{center}
{\footnotesize{{Figure 2.}(a) Visual illustration of Example \ref{exam1} when $\beta=\nu$, and initial points $z_1^{(0)}=(0.1,0.5)$, $z_2^{(0)}=(2.5,0.1)$, $z_3^{(0)}=(0.1,0.6)$, $z_4^{(0)}=(3,0.1)$. (b) Visual illustration of Example \ref{exam2} when $\beta>\nu$, and initial points $z_1^{(0)}=(0.1,0.3)$, $z_2^{(0)}=(1,0.1)$, $z_3^{(0)}=(0.5,0.5)$, $z_4^{(0)}=(0.5,0.66)$, $z_5^{(0)}=(2,0.1)$, $z_6^{(0)}=(4,0.3)$.}}

\section{Discussion and conclusion}
We note that for continuous time system (\ref{system}) the following results are known (see \cite{J.Li}):  All fixed (equilibrium) points are found and their types are determined. Moreover, local behavior of the dynamical system in the neighborhood of the fixed point $(0,0)$ is studied. In our discrete-time case, for $d_0>0$ we also determined types of all fixed points. Besides this we have been able to study global behavior of the system in neighborhood of $(0,0)$. Also, we have found invariant sets and studied the dynamical system on the sets. The last results are not known for the continuous time.

%
%

\end{document}